\newtheorem{theorem}{Theorem}[section]
\newtheorem*{maintheorem}{Theorem}
\newtheorem*{Pseudopullback Theorem}{Pseudopullback Theorem}
\newtheorem*{Cofibration Theorem}{Cofibration Theorem}
\newtheorem{corollary}[theorem]{Corollary}
\newtheorem{lemma}[theorem]{Lemma}
\newtheorem{proposition}[theorem]{Proposition}
\theoremstyle{definition}
\theoremstyle{remark}
\newtheorem{remark}[theorem]{Remark}
\newtheorem{example}[theorem]{Example}
\newcommand{\cyl}{\operatorname{Cyl}}
\newcommand\cc{\mathcal {C}}
\newcommand\cf{\mathcal {F}}
\newcommand\ck{\mathcal {K}}
\newcommand\cl{\mathcal {L}}
\newcommand\cm{\mathcal {M}}
\newcommand\cp{\mathcal {P}}
\newcommand\cw{\mathcal {W}}
\newcommand\cx{\mathcal {X}}
\newcommand\modR{\mathrm{Mod}(R)}
\begin{document}

\title{The accessibility rank of weak equivalences} %

\author{G. Raptis}
\address{Fakult\"{a}t f\"{u}r Mathematik,
Universit\"{a}t Regensburg, 
93040 Regensburg, Germany}
\email{georgios.raptis@mathematik.uni-regensburg.de}

\author{J. Rosick\'y}%
\address{Department of Mathematics and Statistics, 
Masaryk University, Faculty of Sciences, 
Kotl\'{a}\v{r}sk\'{a} 2, 60000 Brno, Czech Republic}
\email{rosicky@math.muni.cz}

\begin{abstract}
We study the accessibility properties of trivial cofibrations and weak equivalences in a combinatorial 
model category and prove an estimate for the accessibility rank of weak equivalences. In particular, 
we show that the class of weak equivalences between simplicial sets is finitely accessible. 
\end{abstract}

\maketitle 

\section{Introduction}

A well-known and useful property of combinatorial model categories is that their classes of weak 
equivalences are accessible. Although each of the various proofs of this result can also give some 
estimate for the accessibility rank, these estimates are generally not the best possible. The purpose of 
this paper is to address the issue of determining good estimates for the accessibility 
rank of weak equivalences in cases of interest. Our main result is the following theorem. 

\begin{maintheorem}(see Theorem \ref{thmA})
Let $\mathcal{M}$ be a $\kappa$-combinatorial model category. Suppose that the following are satisfied:
\begin{itemize}
\item[(i)] there is a set $\mathcal{A}$ of $\kappa$-presentable cofibrant objects such that every object in $\mathcal{M}$ 
is a $\kappa$-filtered colimit of object in $\mathcal{A}$. 
\item[(ii)] every $\kappa$-presentable cofibrant object in $\mathcal{A}$ has a (not necessarily functorial) cylinder object which is again $\kappa$-presentable.
\item[(iii)] if a composite morphism $i' = q i$ is a cofibration between cofibrant objects, then $i$ is a formal cofibration. 
\end{itemize}
Then the full subcategory $\cw$ of $\mathcal{M}^{\to}$ spanned by the class of weak equivalences is $\kappa$-accessible.
\end{maintheorem}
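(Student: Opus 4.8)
The plan is to present $\cw$ as built from pieces that are each $\kappa$-accessible and then to glue them by the Pseudopullback Theorem, the essential work being to keep every piece at rank exactly $\kappa$ rather than at the larger cardinal produced by the small object argument. The starting observation is the two-out-of-three reduction: if one factors a morphism $f\colon X\to Y$ as a weak equivalence followed by a fibration $q_f$, then $f$ is a weak equivalence if and only if $q_f$ is a trivial fibration, and the latter is an injectivity condition (right lifting against the $\kappa$-presentable generating cofibrations), hence cuts out a $\kappa$-accessible, accessibly embedded subcategory of $\cm^{\to}$. The obstruction is that the factorization functor $f\mapsto q_f$ coming from the small object argument preserves only $\lambda$-filtered colimits for some $\lambda>\kappa$, so this route does not by itself give rank $\kappa$. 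Hypotheses (i)--(iii) are exactly what is needed to replace this expensive factorization by a $\kappa$-presentable one.

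Next I would use (ii) to build an economical, $\kappa$-accessible replacement of the factorization through the \emph{mapping cylinder}. For $A\in\ca$ with chosen cylinder $\cyl(A)$ and a morphism $f\colon A\to B$, the pushout $M_f=B\cup_A\cyl(A)$ (glued along one end inclusion) factors $f$ as a cofibration $j_f\colon A\to M_f$ between cofibrant objects followed by the weak equivalence $M_f\to B$; by two-out-of-three, $f$ is a weak equivalence if and only if $j_f$ is a trivial cofibration. Crucially, since $\cyl(A)$ is again $\kappa$-presentable by (ii) and $M_f$ is a finite colimit of $\kappa$-presentable objects, the assignment $f\mapsto j_f$ is a $\kappa$-accessible functor; extending it off the generators by (i), so that every object of $\cm^{\to}$ is a $\kappa$-filtered colimit of arrows between objects of $\ca$, this reduces the detection of weak equivalences to the detection of trivial cofibrations between cofibrant objects at no cost in rank.

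It therefore remains to prove that the full subcategory $\mathcal{TC}\subseteq\cm^{\to}$ of trivial cofibrations between cofibrant objects is $\kappa$-accessible and accessibly embedded. Here I would detect the trivial cofibration property by homotopical witnesses expressible through the $\kappa$-presentable cylinders --- deformation-retraction-type data --- which are automatically $\kappa$-presentable and therefore keep the description at rank $\kappa$. Sufficiency of such data is formal; the point of (iii) is to supply the converse: because a cofibration arising as a factor of a cofibration between cofibrant objects is a formal cofibration, the cellular decomposition structure of trivial cofibrations is compatible with the cylinder witnesses, so that every trivial cofibration between cofibrant objects is a $\kappa$-filtered colimit of elementary ones carrying such witnesses. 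Combined with the Cofibration Theorem, which gives the $\kappa$-accessibility of the ambient subcategory of cofibrations between cofibrant objects, this exhibits $\mathcal{TC}$ as $\kappa$-accessible; pulling back along the $\kappa$-accessible mapping-cylinder functor and invoking the Pseudopullback Theorem then yields the $\kappa$-accessibility of $\cw$.

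The main obstacle I expect is precisely this last detection step and its rank bookkeeping: proving that $\cw$ --- equivalently $\mathcal{TC}$ --- is closed under $\kappa$-filtered colimits in $\cm^{\to}$ and satisfies a solution-set condition at level $\kappa$, rather than only at some $\lambda>\kappa$. The $\kappa$-presentability of cylinders in (ii) is what bounds the size of the witnessing homotopies for the solution-set condition, while the formal-cofibration hypothesis (iii) is what guarantees that a $\kappa$-filtered colimit of trivial cofibrations equipped with cylinder witnesses is again witnessed, i.e.\ the closure under $\kappa$-filtered colimits. Once these two rank estimates are secured, the remaining assembly --- accessibility of trivial fibrations and of cofibrations, and stability of accessibility under pseudopullback --- is formal.
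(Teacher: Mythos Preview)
Your plan has a genuine gap at its central step, and the paper takes a different route precisely to avoid it.

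The main problem is your category $\mathcal{TC}$. The paper notes explicitly at the start of Section~\ref{main-proof} that trivial cofibrations are \emph{not} closed under $\kappa$-filtered colimits in $\cm^{\to}$ in general (equivalently, cofibrations are not so closed; weak equivalences are, by Proposition~\ref{acc-embedded}). Hence $\mathcal{TC}$ is not $\kappa$-accessibly embedded in $\cm^{\to}$, and the Pseudopullback Theorem does not apply to the inclusion $\mathcal{TC}\hookrightarrow\cm^{\to}$. This is also why Theorem~\ref{cofibration-thm} and Corollary~\ref{triv-cof} do \emph{not} assert $\kappa$-accessibility of the subcategory of (trivial) cofibrations --- only that each such morphism is individually a $\kappa$-directed colimit of small ones --- so you are reading more into them than is there. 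Your proposed fix, adjoining ``deformation-retraction-type data'' as witnesses, would replace $\mathcal{TC}$ by a category with a non-full functor to $\cm^{\to}$, and then the pseudopullback no longer computes $\cw$. A secondary obstruction: hypothesis~(ii) provides only \emph{non-functorial} cylinders, so there is no mapping-cylinder \emph{functor} $f\mapsto j_f$ to pull back along in the first place. You also misplace~(iii): closure of $\cw$ under $\kappa$-filtered colimits is Proposition~\ref{acc-embedded} and uses none of~(i)--(iii).

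The paper avoids the Pseudopullback Theorem entirely and argues directly that the inclusion $(w\cm^{\to}_{\kappa}\downarrow f)\hookrightarrow(\cm^{\to}_{\kappa}\downarrow f)$ is cofinal for every weak equivalence $f$. Given a square from $g:K\to L$ (with $K,L$ $\kappa$-presentable) to $f$, one uses~(i) to reduce to $K,L\in\ca$, then~(ii) to replace $g$ by the cofibration $i:K\to M_g$ via the mapping cylinder, then Corollary~\ref{triv-cof-fact} (which packages the fat small-object argument) to factor the resulting square through a trivial cofibration $j:K'\to L'$ between $\kappa$-presentable objects. Condition~(iii) enters only at the final step: it guarantees that $l_1:M_g\to L'$ is a formal cofibration, so pushing $L'$ out along the collapse $q:M_g\to L$ produces a weak equivalence $j'j:K'\to L''$ between $\kappa$-presentable objects through which the original square factors.
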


The $\kappa$-accessibility of $\cw$ in $\cm^{\to}$ means the following: first, $\cw$ is closed under $\kappa$-filtered colimits and, secondly, given 
a weak equivalence $f: X \to Y$ in $\cm$ and a morphism $g:K\to L$ between $\kappa$-presentable objects, then every commutative square
\[
\xymatrix{
K \ar[r] \ar[d]^g & X \ar[d]^f \\
L \ar[r] & Y
}
\]
admits a factorization
\[
\xymatrix{
K \ar[r] \ar[d]^g & K^\prime \ar[d]^h \ar[r] & X \ar[d]^f \\
L \ar[r] & L^\prime \ar[r] & Y
} 
\]
where $h: K' \to L'$ is weak equivalence between $\kappa$-presentable objects. The first claim is the easy part of the theorem and its proof does 
not require the additional assumptions (i)-(iii). The proof of the second claim is more difficult and requires the auxiliary assumptions (i)-(iii). 

So far, proofs of the accessibility of the weak equivalences in a general combinatorial model category have made essential use of 
the Pseudopullback Theorem. The Pseudopullback Theorem, which will be recalled in more detail in Section 2, says that pseudopullbacks of 
accessible categories and functors are again accessible categories. In addition, the proof of this theorem can be used to infer an estimate for 
the accessibility rank of the pseudopullback based on the accessibility data of the diagram. By expressing the category of weak equivalences 
$\cw$ as such a pseudopullback, it is then possible to obtain estimates for its accessibility rank. Using this method, it is not difficult to show, for example, that the category of weak 
equivalences of simplicial sets, $\cw \subset \mathcal{SS}et^{\to}$, is $\aleph_1$-accessible. More generally, applying this method to get an
estimate for the accessibility rank of the weak equivalences in a $\kappa$-combinatorial model category will generally produce a regular 
cardinal strictly greater than $\kappa$. 

Our proof uses different methods which we think are also of independent interest. An important ingredient is the fat small-object argument 
which was introduced in \cite{MRV}. Based on this, we first prove in Section 3 that \textit{every trivial 
cofibration in a $\kappa$-combinatorial model category is a $\kappa$-directed colimit of trivial cofibrations between $\kappa$-presentable 
objects} (Corollary \ref{triv-cof}). This is also the first step towards the proof of the main theorem (Theorem \ref{thmA}) in Section 4. 

In Section 5 we discuss examples to which our theorem applies. These include the following:
\begin{itemize} 
\item the category of weak equivalences of simplicial sets is finitely accessible (Corollary \ref{corA}). This generalizes easily
to combinatorial model categories whose underlying category is a Grothendieck topos and the cofibrations are the monomorphisms (Corollary \ref{corB}). 
We note that a different proof of the finite accessibility of the weak equivalences of simplicial sets appeared subsequently 
in \cite{BS}.
\item the category of quasi-isomorphisms of chain complexes of $R$-modules is finitely accessible if $R$ is semi-simple (Corollary \ref{corC}). 
\item the category of stable isomorphisms of $R$-modules is finitely accessible if $R$ is a Noetherian Frobenius ring (Corollary \ref{corD}). 
\end{itemize}
Naturally, the accessibility rank is especially interesting in the cases where it happens to be $\aleph_0$, however, our main result will generally 
predict estimates in other cases too. We end Section 5 with some comments on the cases of diagram model categories and left Bousfield localizations. 

\

\noindent \textbf{Acknowledgements.} We are grateful to the referee for pointing out a simplification of our original proof which actually also led to a 
more general statement requiring less than our original assumptions. The first-named author would like to warmly acknowledge the hospitality of the Department 
of Mathematics in Brno, which he enjoyed during several visits while this work was being done. The second-named author was supported by GA\v{C}R 201/11/0528.

\section{Background material and preliminaries} \label{background}

Combinatorial model categories were defined by J. H. Smith to be
model categories $\cm$ which are locally presentable and cofibrantly generated. The latter means that both cofibrations 
and trivial cofibrations are cofibrantly generated by a set of morphisms. There is always a regular
cardinal $\kappa$ such that $\cm$ is locally $\kappa$-presentable and the generating cofibrations and trivial cofibrations are morphisms 
between $\kappa$-presentable objects. In this case we say that $\cm$ is $\kappa$-combinatorial. For instance, the standard model category of
simplicial sets, denoted $\mathcal{SS}et$, is finitely combinatorial (= $\aleph_0$-combinatorial). 

In a $\kappa$-combinatorial model category $\cm$, the `replacement-by-fibration' functor $R_{\mathrm{fib}}:\cm^{\to}\to\cm^{\to}$ that arises from the (trivial cofibration, fibration)-factorization, 
as given by the small-object argument, preserves $\kappa$-filtered colimits but it does not preserve $\kappa$-presentable objects in general. By the Uniformization 
Theorem (see \cite[2.4.9]{MP}, or \cite[2.19]{AR}), there is a regular cardinal $\lambda\geq\kappa$ such that $R_{\mathrm{fib}}$ preserves $\lambda$-presentable objects. This cardinal is important because
it makes the category of fibrations $\lambda$-accessible - any fibration is a $\lambda$-filtered colimit in $\cm^{\to}$ of fibrations between 
$\lambda$-presentable objects. To see this, consider a fibration $p: X \to Y$ in $\cm$ and express it as a $\lambda$-filtered 
colimit of morphisms $f_i : X_i \to Y_i$ between $\lambda$-presentable objects. Then $R_{\mathrm{fib}}(p)$ is a $\lambda$-filtered colimit of 
fibrations between $\lambda$-presentable objects. Since $p$ is a retract of $R_{\mathrm{fib}}(p)$, it follows that $p$ 
can also be expressed in this way (see the proof of \cite[2.3.11]{MP}). 

\begin{example} 
Let $\cm = \mathcal{SS}et$ and $R_{\mathrm{fib}}: \mathcal{SS}et^{\to} \to \mathcal{SS}et^{\to}$ the standard replacement by a fibration as given 
by the small-object argument. The functor $R_{\mathrm{fib}}$ preserves filtered colimits and sends finitely presentable objects to $\aleph_1$-presentable
ones. It follows that $R_{\mathrm{fib}}$ preserves $\aleph_1$-presentable objects since every $\aleph_1$-presentable object is an 
$\aleph_1$-small filtered colimit of finitely presentable objects (see \cite[2.3.11]{MP}, \cite[2.15]{AR}).
\end{example}

Analogously, one can consider instead the `replacement-by-trivial fibration' functor $R_{\mathrm{wfib}}: \cm^{\to} \to \cm^{\to}$ that comes from the standard (cofibration, trivial fibration)-factorization in $\cm$ and ask for the smallest regular cardinal $\lambda$ such
that $R_{\mathrm{wfib}}$ preserves $\lambda$-filtered colimits and $\lambda$-presentable objects. In $\mathcal{SS}et$, the smallest such $\lambda$ is again $\aleph_1$, which then shows that the category $\cf \cap \cw$ of 
trivial fibrations in $\mathcal{SS}et^{\to}$ is $\aleph_1$-accessible. 

Combining these observations, the accessibility of the class of weak equivalences, and an estimate for the accessibility rank, can be deduced from the following theorem. 

\begin{Pseudopullback Theorem}\label{pspbth}
Let $\lambda$ be a regular cardinal and $\ck$, $\cl$ and $\cm$ $\lambda$-accessible categories which admit $\kappa$-filtered 
colimits for some $\kappa < \lambda$. Let $F: \ck \to \cl$ and $G: \cm \to \cl$ be functors which preserve $\kappa$-filtered colimits and $\lambda$-presentable objects. Consider the pseudopullback
\[
\xymatrix@=4pc{
\cp \ar[d] \ar[r] & \cm \ar[d]_{G} \\
\ck \ar[r]^{F} & \cl \\
}
\]
Then $\cp$ is $\lambda$-accessible and has $\kappa$-filtered colimits.
\end{Pseudopullback Theorem}
 
This theorem is essentially shown in \cite[3.1]{CR}. Although the statement of \cite[3.1]{CR} includes stronger accessibility 
properties, exactly the same proof applies here too. H.-E. Porst has recently informed us that this theorem follows from the unpublished
paper \cite{U}.
 
The category of weak equivalences can be expressed as a pullback 
\[
\xymatrix@=4pc{
\cw \ar[d] \ar[r] & \cf \cap \cw \ar[d]_{G} \\
\cm^\to \ar[r]^{R_{\mathrm{fib}}} & \cm^\to \\
}
\]
where $G$ is the full embedding of the trivial fibrations. Since $G$ is transportable, this 
pullback is equivalent to a pseudopullback (see \cite[5.1.1]{MP}). Then the above Pseudopullback Theorem applied 
to this pseudopullback gives an estimate for the accessibility rank of $\cw$. Let us add that the accessibility 
of $\cw$ was claimed by J. H. Smith and proofs (also using the Pseudopullback Theorem) can be found in 
\cite[Corollary A.2.6.6]{L} and \cite{R}. 

\begin{proposition} \label{classical-estimate}
Let $\mathcal{M}$ be a $\kappa$-combinatorial model category and $\lambda > \kappa$ a regular cardinal such that  
both $R_{\mathrm{fib}}$ and $R_{\mathrm{wfib}}$ preserve $\lambda$-presentable objects. Then the category 
$\cw$ of weak equivalences, as a full subcategory of $\cm^{\to}$, is $\lambda$-accessible and admits $\kappa$-filtered
colimits.
\end{proposition}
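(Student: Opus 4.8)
The plan is to deduce Proposition \ref{classical-estimate} as an immediate consequence of the Pseudopullback Theorem, applied to the pseudopullback presentation of $\cw$ already exhibited in the excerpt. I first need to verify that the diagram
\[
\xymatrix@=4pc{
\cw \ar[d] \ar[r] & \cf \cap \cw \ar[d]_{G} \\
\cm^\to \ar[r]^{R_{\mathrm{fib}}} & \cm^\to \\
}
\]
satisfies all the hypotheses of that theorem with the given $\lambda$. For the accessibility data of the corners: $\cm$ is locally $\kappa$-presentable, so $\cm^\to$ (the arrow category) is locally $\kappa$-presentable and in particular $\lambda$-accessible with $\kappa$-filtered (indeed all) colimits. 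The corner $\cf \cap \cw$ is $\lambda$-accessible and admits $\kappa$-filtered colimits by the discussion preceding the proposition, exactly as the case of $\mathcal{SS}et$ was handled, using that $R_{\mathrm{wfib}}$ preserves $\lambda$-presentable objects and $\kappa$-filtered colimits.

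Next I would check the functor conditions. The bottom functor $R_{\mathrm{fib}}$ arises from the small-object argument applied to the (trivial cofibration, fibration)-factorization; it preserves $\kappa$-filtered colimits by construction, and by the choice of $\lambda$ it preserves $\lambda$-presentable objects. The right-hand functor $G$ is the full embedding of the trivial fibrations into $\cm^\to$; as the inclusion of a full subcategory closed under the relevant colimits, it preserves $\kappa$-filtered colimits, and since the $\lambda$-presentable objects of $\cf\cap\cw$ are precisely the trivial fibrations between $\lambda$-presentable objects of $\cm$, $G$ carries them to $\lambda$-presentable objects of $\cm^\to$. Thus both $F = R_{\mathrm{fib}}$ and $G$ preserve $\kappa$-filtered colimits and $\lambda$-presentable objects, as required.

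With all hypotheses in force, the Pseudopullback Theorem yields that the pseudopullback $\cp$ is $\lambda$-accessible and has $\kappa$-filtered colimits. It remains only to identify $\cp$ with $\cw$. Here I would invoke the point already flagged in the excerpt: the strict pullback computes $\cw$ (an object is a morphism $f$ of $\cm^\to$ together with the data exhibiting $R_{\mathrm{fib}}(f)$ as a trivial fibration, which is equivalent to $f$ being a weak equivalence), and because $G$ is transportable this strict pullback is equivalent to the pseudopullback (see \cite[5.1.1]{MP}). Transporting the accessibility and colimit data across this equivalence gives that $\cw$, as a full subcategory of $\cm^\to$, is $\lambda$-accessible and admits $\kappa$-filtered colimits, which is the claim.

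The only genuinely delicate point is the identification of the pullback with $\cw$ and the passage from pullback to pseudopullback; the accessibility bookkeeping is then routine. I expect the main obstacle to lie in making precise why $\cp \simeq \cw$ as subcategories of $\cm^\to$ — that is, confirming that the weak-equivalence condition on $f$ is captured exactly by the factorization $R_{\mathrm{fib}}(f)$ being a trivial fibration, together with the two-out-of-three / retract properties that make this characterization hold. Once transportability of $G$ justifies replacing the strict pullback by the pseudopullback, the conclusion follows directly from the theorem.
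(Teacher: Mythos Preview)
Your proposal is correct and follows exactly the approach the paper takes: the proposition is stated without a separate proof block precisely because it summarizes the preceding discussion, which expresses $\cw$ as the (pseudo)pullback of $R_{\mathrm{fib}}$ against the transportable inclusion $G:\cf\cap\cw\hookrightarrow\cm^{\to}$ and then invokes the Pseudopullback Theorem. Your verification of the hypotheses (accessibility of the corners via $R_{\mathrm{wfib}}$, preservation of $\kappa$-filtered colimits and $\lambda$-presentable objects by $R_{\mathrm{fib}}$ and $G$, and the identification of the strict pullback with the pseudopullback using transportability of $G$) is precisely what the paper intends.
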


\begin{example}
The discussion above and the last proposition show that the full subcategory $\cw$ of weak equivalences in $\mathcal{SS}et^{\to}$ is $\aleph_1$-accessible. 
\end{example}

This means that the estimate that we can get with this method for the accessibility rank of $\cw$  in a $\kappa$-combinatorial model category $\cm$ is going to be 
strictly greater than $\kappa$. A way of improving this estimate was opened in \cite{MRV} where the idea of good colimits from \cite{L} was used to prove 
that every cofibrant object in a $\kappa$-combinatorial model category is a $\kappa$-directed colimit of $\kappa$-presentable cofibrant objects. In Section 3, 
we will show how this can be used to prove that every trivial cofibration in a $\kappa$-combinatorial model category is a $\kappa$-directed colimit of trivial 
cofibrations between $\kappa$-presentable objects. This is the first step of the proof of our main theorem in Section 4, but the rest of the proof requires more 
assumptions on $\cm$.

\begin{remark}
It was shown in \cite{Ra2} that for every combinatorial model category $\cm$ there is an accessible functor $F: \cm \to \mathcal{SS}et$ such that a morphism 
$f$ in $\cm$ is a weak equivalence if and only if $F(f)$ is a weak equivalence in $\mathcal{SS}et$. Then the accessibility of $\cw$ follows from the Pseudopullback 
Theorem and the accessibility of the weak equivalences in $\mathcal{SS}et$. We like to point out and correct a small error in the construction of $F$ in \cite{Ra2}. 
We recall that $F$ was defined by a composition of functors as follows, 
$$\cm \stackrel{R}{\to} \cm \stackrel{G}{\to} \mathcal{SS}et^C \stackrel{u^*}{\to} \mathcal{SS}et^{\mathrm{Ob}C} \to \mathcal{SS}et,$$
where $R$ is an accessible fibrant replacement functor, $G$ is the right Quillen functor associated with a small presentation of $\cm$ \cite{D}, $u^*$ is the 
restriction functor and the last functor is an accessible functor that detects the (pointwise) weak equivalences of (pointwise) Kan fibrant objects. The last functor 
was wrongly chosen in \cite{Ra2} to be the product functor: this functor does not detect the weak equivalences when the empty (simplicial) set appears in the product! 
However, there are clearly other functors that have the required property, e.g., the coproduct. 
\end{remark}

\section{Trivial cofibrations} \label{fat-small-object}

Combinatorial categories, introduced in \cite{MR}, are locally presentable categories $\ck$
equipped with a class of morphisms $\mathrm{cof}(\ck)$, called cofibrations, which is cofibrantly 
generated by a set of morphisms. This basic categorical structure is inspired by combinatorial model 
categories, which can be viewed as combinatorial categories in (at least) two different ways, and the 
aim is to analyze through this generalization the abstract properties of cofibrant generation by focusing 
on a single cofibrantly generated class  of morphisms. 

In analogy with combinatorial model categories, we say that a combinatorial category $(\ck, \mathrm{cof}(\ck))$
is $\kappa$-combinatorial if it is locally $\kappa$-presentable and there is a set of generating cofibrations $I$ between $\kappa$-presentable objects. 
In what follows, $\ck_\kappa$ will denote the full subcategory of $\ck$ consisting of $\kappa$-presentable objects. 
Every locally presentable category carries the trivial combinatorial structure where every morphism is a cofibration. The following result is \cite[2.4]{MR}
but we will recall the proof.

\begin{lemma} \label{trivial} 
Let $\ck$ be a locally $\kappa$-presentable category. Then the trivial combinatorial structure on $\ck$ is $\kappa$-combinatorial.
\end{lemma}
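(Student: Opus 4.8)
The plan is to exhibit an explicit generating set and to show it generates all morphisms, since in the trivial structure every morphism is a cofibration. First I would take $I$ to be (a representative set of) all morphisms between $\kappa$-presentable objects; this is genuinely a set because $\ck_\kappa$ is essentially small in a locally $\kappa$-presentable category. The generating cofibrations are then by construction morphisms between $\kappa$-presentable objects, so it remains only to verify that $\mathrm{cof}(I) = \mathrm{llp}(\mathrm{rlp}(I))$ is the class of all morphisms of $\ck$.

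Since every morphism has the left lifting property against any isomorphism, it suffices to prove that $\mathrm{rlp}(I)$ consists precisely of the isomorphisms. One inclusion is immediate, as isomorphisms lift against everything. For the converse, let $p : X \to Y$ lie in $\mathrm{rlp}(I)$. For each $\kappa$-presentable object $K$, the initial object $\emptyset$ and the coproduct $K \sqcup K$ are again $\kappa$-presentable, so both the map $\emptyset \to K$ and the fold map $K \sqcup K \to K$ belong to $I$. Testing the right lifting property of $p$ against $\emptyset \to K$ shows that $\ck(K,X) \to \ck(K,Y)$ is surjective, while testing against $K \sqcup K \to K$ shows that it is injective; hence $\ck(K,X) \to \ck(K,Y)$ is a bijection for every $K \in \ck_\kappa$.

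It remains to pass from these pointwise bijections to the conclusion that $p$ is an isomorphism, and this is the step that carries the real content. I would invoke the density of $\ck_\kappa$ in $\ck$: for a locally $\kappa$-presentable category the restricted Yoneda embedding $\ck \to [\ck_\kappa^{\mathrm{op}}, \mathrm{Set}]$, $X \mapsto \ck(-,X)|_{\ck_\kappa}$, is fully faithful, and a fully faithful functor is conservative. Since the bijections above say exactly that $p$ is carried to an isomorphism of presheaves, conservativity forces $p$ to be an isomorphism. This identifies $\mathrm{rlp}(I)$ with the isomorphisms, whence $\mathrm{cof}(I)$ is all of $\ck$ and the trivial structure is $\kappa$-combinatorial.
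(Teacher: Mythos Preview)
Your proof is correct and begins with the same overall strategy as the paper: take $I$ to be all morphisms of $\ck_\kappa$ and show that $\mathrm{rlp}(I)$ consists only of isomorphisms. The difference lies in how one proves that a morphism $p \in \mathrm{rlp}(I)$ is an isomorphism. The paper observes that such a $p$ is automatically both a $\kappa$-pure monomorphism and a $\kappa$-pure epimorphism, and then invokes results from \cite{AHT} and \cite{AR1} to conclude that $p$ is a regular monomorphism and an epimorphism, hence an isomorphism. Your argument is more elementary and self-contained: you test $p$ only against the specific maps $\emptyset \to K$ and $\nabla: K \sqcup K \to K$ (which are in $I$ since $\kappa$-presentable objects are closed under finite colimits) to see that $\ck(K,p)$ is bijective for every $K \in \ck_\kappa$, and then invoke density of $\ck_\kappa$, i.e.\ conservativity of the restricted Yoneda embedding. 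This avoids the detour through purity and the external references; the paper's route, in turn, highlights the connection with the theory of pure morphisms.
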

\begin{proof}
If a morphism $g$ has the right lifting property with respect to all morphisms between $\kappa$-presentable
objects then $g$ is both a $\kappa$-pure monomorphism and a $\kappa$-pure epimorphism. Thus $g$ is both a regular monomorphism and an epimorphism
(see \cite{AHT}, \cite{AR1}), which means that $g$ is an isomorphism. Hence any morphism has the left lifting property with respect to $g$.
\end{proof}

We say that an object $K$ of a combinatorial category is cofibrant if the unique morphism $0\to K$ from an initial object 
is a cofibration. One of the main results of \cite{MRV} shows that \emph{every cofibrant object in a $\kappa$-combinatorial category is a $\kappa$-directed colimit of $\kappa$-presentable cofibrant objects} \cite[5.1]{MRV}. A consequence of this is the next theorem.

\begin{theorem} \label{cofibration-thm}
Let $\ck$ be a $\kappa$-combinatorial category. Then every cofibration is a $\kappa$-directed colimit of cofibrations between $\kappa$-presentable objects.
\end{theorem}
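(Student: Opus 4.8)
The plan is to promote the quoted result of \cite{MRV} on cofibrant objects from $\ck$ to the arrow category $\ck^{\to}=\ck^{\{0\to 1\}}$, by equipping the latter with a $\kappa$-combinatorial structure whose cofibrant objects are precisely the cofibrations of $\ck$. First I would record the standard facts that $\ck^{\to}$ is again locally $\kappa$-presentable and that, since the indexing category $\{0\to 1\}$ is finite, the $\kappa$-presentable objects of $\ck^{\to}$ are exactly the arrows $A\to B$ of $\ck$ with both $A$ and $B$ $\kappa$-presentable. The final statement will then be read off from a colimit decomposition of a cofibration, regarded as a cofibrant object of $\ck^{\to}$, because $\kappa$-directed colimits in $\ck^{\to}$ are computed objectwise.

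The combinatorial structure I would put on $\ck^{\to}$ is the \emph{mixed} one for the direct category $\{0\to 1\}$: the trivial structure of Lemma \ref{trivial} in degree $0$ and the given cofibrations of $\ck$ in degree $1$. Concretely, fixing a set $I$ of generating cofibrations of $\ck$ between $\kappa$-presentable objects, I would take as generating cofibrations of $\ck^{\to}$ the set
\[
\cj \;=\; \bigl\{\, (0 \to U) \longrightarrow (0 \to V) \ \bigm|\ (U \xrightarrow{i} V) \in I \,\bigr\}
\ \cup\
\bigl\{\, (U \xrightarrow{\id} U) \longrightarrow (V \xrightarrow{\id} V) \ \bigm|\ (j\colon U \to V) \text{ in } \ck_\kappa \,\bigr\},
\]
where each map in the second family has both horizontal components equal to $j$. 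Every member of $\cj$ is a morphism between $\kappa$-presentable objects of $\ck^{\to}$ and $\cj$ is a set, so $(\ck^{\to},\operatorname{cof}(\cj))$ is a $\kappa$-combinatorial category. The first family enlarges the codomain by a pushout of a generating cofibration while fixing the domain; the second family, which comes from the trivial structure via Lemma \ref{trivial}, enlarges the domain freely while carrying the same pushout along to the codomain, so that its relative latching map in degree $1$ is an isomorphism.

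The key step is to identify $\operatorname{cof}(\cj)$ through the relative-latching calculus for diagrams over a direct category: a morphism $(X_0\to X_1)\to(Y_0\to Y_1)$ lies in $\operatorname{cof}(\cj)$ if and only if $X_0\to Y_0$ is arbitrary and the relative latching map $X_1\cup_{X_0}Y_0\to Y_1$ is a cofibration of $\ck$. Granting this, the cofibrant objects, i.e.\ those arrows $A\to B$ for which $(0\to 0)\to(A\to B)$ lies in $\operatorname{cof}(\cj)$, are exactly those for which $0\to A$ is unconstrained and $A=A\cup_0 0\to B$ is a cofibration of $\ck$; that is, the cofibrant objects of $(\ck^{\to},\operatorname{cof}(\cj))$ are precisely the cofibrations of $\ck$. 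I expect this identification to be the main obstacle, since it is where one must verify that the two families really cut out the mixed class and, in particular, that admitting an arbitrary map in degree $0$ does not spoil cofibrant generation; this is a routine but careful small-object/Reedy argument.

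With these pieces in place the theorem is immediate. Applying \cite[5.1]{MRV} to the $\kappa$-combinatorial category $(\ck^{\to},\operatorname{cof}(\cj))$ expresses any cofibration $f\colon A\to B$, viewed as a cofibrant object of $\ck^{\to}$, as a $\kappa$-directed colimit of $\kappa$-presentable cofibrant objects; by the two identifications above these are exactly the cofibrations between $\kappa$-presentable objects of $\ck$. Since $\kappa$-directed colimits in $\ck^{\to}$ are formed objectwise, this colimit exhibits $f$ itself as a $\kappa$-directed colimit of cofibrations between $\kappa$-presentable objects, as required.
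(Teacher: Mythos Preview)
Your approach is correct and matches the paper's strategy: both endow $\ck^{\to}$ with a $\kappa$-combinatorial structure whose cofibrant objects are exactly the cofibrations of $\ck$, and then invoke \cite[5.1]{MRV}. The paper's generating set differs only cosmetically from yours---it keeps your second family (called $\chi_1$ there) but replaces your first family by
\[
\chi_2=\bigl\{(\id_A,i)\colon(\id\colon A\to A)\longrightarrow(i\colon A\to B)\ \bigm|\ i\in\ci\bigr\};
\]
a short lifting calculation shows that both choices have the same right class (the morphisms $(p_0,p_1)$ with $p_0$ an isomorphism and $p_1$ a trivial fibration), hence generate the same class of cofibrations in $\ck^{\to}$. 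The one substantive difference is how the cofibrant objects are identified: you appeal to the Reedy relative-latching description of the entire left class, while the paper argues directly that a cofibration $f\colon X\to Y$ is cofibrant by factoring $(\id_0)\to f$ as the $\chi_1$-cofibration $(\id_0)\to(\id_X)$ (this is where Lemma~\ref{trivial} enters) followed by the $\chi_2$-cofibration $(\id,f)\colon(\id_X)\to f$. The paper's two-step argument is shorter and sidesteps the Reedy formalism you flagged as the ``main obstacle''; your route is more systematic and, as a bonus, characterises the full left class rather than just its cofibrant objects.
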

\begin{proof}
Let $\mathcal{I}\subseteq\ck_\kappa^{\to}$ be a generating set of cofibrations. Consider the following sets of morphisms in $\ck^{\to}$:
$$\chi_1 = \{(\mathrm{id}: A \to A) \stackrel{(g,g)}{\longrightarrow} (\mathrm{id}: B \to B) \colon g \in \mathcal{K}_{\kappa}^{\to} \}$$
and 
$$\chi_2 = \{(\mathrm{id}: A \to A) \stackrel{(\mathrm{id},i)}{\longrightarrow} (i: A \to B) \colon i \in \mathcal{I} \}.$$
Then the set $\chi_1 \cup \chi_2$ generates a $\kappa$-combinatorial structure on $\ck^{\to}$. By \cite[5.1]{MRV}, it suffices to show that its cofibrant objects are precisely the cofibrations in $\ck$. It is easy to see that every cofibrant object in $\ck^{\to}$ is a cofibration in $\ck$. Conversely, let $f: X \to Y$ be a cofibration in $\mathcal{K}$. An immediate consequence of Lemma \ref{trivial} is that the object $(\mathrm{id}: X \to X)$ is cofibrant with respect to $\chi_1$. Moreover, the morphism in $\mathcal{K}^{\to}$: 
$$(\mathrm{id}, f) \colon (\mathrm{id}: X \to X) \longrightarrow (f: X \to Y)$$
is a cofibration with respect to $\chi_2$. Therefore $f$ is a cofibrant object with respect to $\chi_1 \cup \chi_2$ and 
the result follows.  
\end{proof}

\begin{corollary} \label{triv-cof}
Let $\mathcal{M}$ be a $\kappa$-combinatorial model category. Then every trivial cofibration in $\mathcal{M}$ 
is a $\kappa$-directed colimit of trivial cofibrations between $\kappa$-presentable objects.
\end{corollary}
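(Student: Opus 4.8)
The plan is to reduce this statement to Theorem \ref{cofibration-thm} by choosing the right combinatorial structure on $\cm$. Recall the remark from Section \ref{fat-small-object} that a combinatorial model category can be regarded as a combinatorial category in (at least) two ways: one takes the distinguished class of cofibrations to be either the cofibrations or the trivial cofibrations of the model structure. Here I would use the second option.

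First I would observe that the class of trivial cofibrations of $\cm$ is cofibrantly generated by a set $J$ of morphisms, by the very definition of a combinatorial model category. Since $\cm$ is $\kappa$-combinatorial, the domains and codomains of the morphisms in $J$ may be taken to be $\kappa$-presentable, and $\cm$ itself is locally $\kappa$-presentable. Thus $\cm$, equipped with the trivial cofibrations as its distinguished class of cofibrations, is a $\kappa$-combinatorial category in the sense of Section \ref{fat-small-object}.

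Having set this up, I would simply apply Theorem \ref{cofibration-thm} to this $\kappa$-combinatorial category. The theorem then asserts that every cofibration of this structure, that is, every trivial cofibration of the model category, is a $\kappa$-directed colimit of cofibrations between $\kappa$-presentable objects, that is, of trivial cofibrations between $\kappa$-presentable objects. This is exactly the claim of the corollary.

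The only point that needs care, and which I regard as the main (if modest) obstacle, is confirming that the trivial cofibrations genuinely constitute a legitimate cofibrantly generated class for the combinatorial-category framework: one must know that the class cofibrantly generated by $J$ coincides with the full class of trivial cofibrations. This is the standard retract argument for cofibrantly generated model categories, so no real difficulty arises, and the entire substance of the corollary is contained in Theorem \ref{cofibration-thm}.
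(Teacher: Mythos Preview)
Your proposal is correct and matches the paper's own proof essentially verbatim: the paper simply says to apply Theorem~\ref{cofibration-thm} to $\cm$ equipped with the class of trivial cofibrations as its $\kappa$-combinatorial structure. Your additional remarks about the retract argument are fine but unnecessary, since this is built into the definition of cofibrantly generated.
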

\begin{proof}
It suffices to apply Theorem \ref{cofibration-thm} to the $\kappa$-combinatorial category defined by the 
locally presentable category $\cm$ together with the class of trivial cofibrations of the model structure on $\cm$. 
\end{proof}

\begin{remark} \label{alternative-arg}
We point out that the proof of Corollary \ref{triv-cof} does not require the existence of a 
generating set of cofibrations between $\kappa$-presentable objects. A related argument that uses this assumption 
but has some other advantages is as follows. Let 
$\mathcal{I}$ denote a generating set of cofibrations between $\kappa$-presentable objects and replace $\chi_1$ with 
$$\chi'_1 = \{(\mathrm{id}: A \to A) \stackrel{(i,i)}{\longrightarrow} (\mathrm{id}: B \to B) \colon i \in \mathcal{I} \}$$
Then $(\mathrm{id}: X \to X)$ is $\chi'_1$-cofibrant if $X$ is cofibrant in $\mathcal{M}$. The same argument as above then shows that a 
trivial cofibration $f: X \to Y$ between \emph{cofibrant} objects is a $\kappa$-directed colimit of a diagram $F: P \to \mathcal{M}^{\to}$ whose values are
trivial cofibrations between $\kappa$-presentable \emph{cofibrant} objects. Another advantage of this argument is that if $X$ and $f$ are cellular
or $\kappa$ is uncountable, then $F$ can be chosen so that for all $p \in P$, the morphism $F(p) \to f$ in $\mathcal{M}^{\to}$ is given by cofibrations (see \cite{MRV}, esp. the proofs of 5.1 and 5.2). We recall that a morphism 
in a combinatorial category, cofibrantly generated by a set $\cx$, is called \textit{cellular} if it can be obtained from $\cx$
by transfinite compositions of pushouts. Cofibrations are then retracts of cellular morphisms.
\end{remark}

Corollary \ref{triv-cof} has the following consequence which may be useful in concrete situations. 

\begin{corollary} \label{triv-cof-fact}
Let $\mathcal{M}$ be a $\kappa$-combinatorial model category and 
\[
\xymatrix{
K \ar[r]^u \ar[d]^i & X \ar[d]^f \\
L \ar[r]^v & Y
}
\]
a commutative square where $i$ is a cofibration between $\kappa$-presentable objects and $f$ is a weak equivalence. Then there is a factorization
\[
\xymatrix{
K \ar[r] \ar[d]^i & K^\prime \ar[d]^j \ar[r] & X \ar[d]^f \\
L \ar[r] & L^\prime \ar[r] & Y
} 
\]
where $j$ is a trivial cofibration between $\kappa$-presentable objects.
\end{corollary}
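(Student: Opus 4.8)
The plan is to reduce the statement to Corollary \ref{triv-cof} by first replacing the weak equivalence $f$ with a trivial cofibration, then approximating the resulting morphism of arrows inside $\cm^{\to}$.

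First I would use the model structure to factor the weak equivalence $f\colon X \to Y$ as $X \xrightarrow{t} Z \xrightarrow{p} Y$, where $t$ is a trivial cofibration and $p$ is a fibration; by the two-out-of-three property $p$ is then a weak equivalence, hence a trivial fibration. The square with top edge $tu\colon K \to Z$, left edge $i$, right edge $p$, and bottom edge $v$ commutes, since $p t u = f u = v i$. Because $i$ is a cofibration and $p$ a trivial fibration, there is a lift $w\colon L \to Z$ with $w i = t u$ and $p w = v$. The pair $(u,w)$ is then a morphism $i \to t$ in $\cm^{\to}$. This is the only step where the hypothesis that $i$ is a cofibration is used.

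Next I would invoke Corollary \ref{triv-cof} to write $t$ as a $\kappa$-directed colimit $t = \colim_\alpha t_\alpha$ in $\cm^{\to}$ of trivial cofibrations $t_\alpha\colon K'_\alpha \to L'_\alpha$ between $\kappa$-presentable objects. Since colimits in $\cm^{\to}$ are computed objectwise, the colimit legs are maps $\sigma_\alpha\colon K'_\alpha \to X$ and $\tau_\alpha\colon L'_\alpha \to Z$ satisfying $t \sigma_\alpha = \tau_\alpha t_\alpha$. As $K$ and $L$ are $\kappa$-presentable, the arrow $i$ is a $\kappa$-presentable object of $\cm^{\to}$: the functor $\cm^{\to}(i,-)$ evaluated at $h$ is the pullback of $\cm(K,-) \to \cm(K,\operatorname{cod} h) \leftarrow \cm(L,-)$, and $\kappa$-filtered colimits commute with this finite limit in $\mathbf{Set}$. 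Hence the morphism $(u,w)\colon i \to t$ factors through some leg, yielding maps $a\colon K \to K'_\alpha$ and $b\colon L \to L'_\alpha$ with $t_\alpha a = b i$, $\sigma_\alpha a = u$, and $\tau_\alpha b = w$.

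Finally I would set $K' = K'_\alpha$, $L' = L'_\alpha$, and $j = t_\alpha$, a trivial cofibration between $\kappa$-presentable objects, and take the horizontal maps to be $a$ and $b$ on the left and $\sigma_\alpha$ and $p\tau_\alpha$ on the right. The left square is $t_\alpha a = b i$; the right square follows from $f \sigma_\alpha = p t \sigma_\alpha = p \tau_\alpha t_\alpha$; and the outer horizontal composites recover $u = \sigma_\alpha a$ and $v = p w = p \tau_\alpha b$, so everything is compatible. The main obstacle is the middle step: passing from the single morphism $i \to t$ in the arrow category to a factorization through a stage $t_\alpha$. This rests on recognizing $i$ as a $\kappa$-presentable object of $\cm^{\to}$ and on having the colimit of the $t_\alpha$ computed objectwise, so that the domain and codomain legs are both available; the only remaining care is to compose the codomain leg with $p$ so as to land in $Y$ rather than $Z$.
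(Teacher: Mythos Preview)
Your proof is correct and follows essentially the same approach as the paper's: factor $f$ as a trivial cofibration followed by a trivial fibration, lift $i$ against the trivial fibration to obtain a morphism $i\to t$ in $\cm^{\to}$, then use Corollary~\ref{triv-cof} together with the $\kappa$-presentability of $i$ in $\cm^{\to}$ to factor through a trivial cofibration between $\kappa$-presentable objects, and finally compose back down to $Y$. The only difference is that you spell out explicitly why $i$ is $\kappa$-presentable in $\cm^{\to}$, which the paper leaves implicit.
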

\begin{proof}
Let 
$$
f : X \xrightarrow{\quad  f_1\quad} Z \xrightarrow{\quad f_2\quad} Y
$$
be a (trivial cofibration, fibration)-factorization of $f$. Then $f_2$ is also a weak equivalence. Since $i$ is a cofibration, there is a 
morphism $l: L\to Z$ making the diagram commutative 
\[
 \xymatrix{
 K \ar[r]^{f_1 u} \ar[d]^i & Z \ar[d]^{f_2} \\
 L \ar[r]^v \ar[ur]^l & Y
 }
\]
Thus, we get a morphism $(u,l): i \to f_1$ in $\cm^{\to}$. By Corollary \ref{triv-cof}, there is a factorization
\[
\xymatrix{
K \ar[r]^{u_1} \ar[d]^i & K^\prime \ar[d]^j \ar[r]^{u_2} & X \ar[d]^{f_1} \\
L \ar[r]^{l_1} & L^\prime \ar[r]^{l_2} & Z
} 
\]
where $u = u_2u_1$, $l =l_2 l_1$ and $j$ is a trivial cofibration between $\kappa$-presentable objects. Thus
\[
\xymatrix{
K \ar[r]^{u_1} \ar[d]^i & K^\prime \ar[d]^j \ar[r]^{u_2} & X \ar[d]^{f} \\
L \ar[r]^{l_1} & L^\prime \ar[r]^{f_2 l_2} & Y
} 
\]
is the required factorization.
\end{proof}

\section{Weak Equivalences} \label{main-proof}

Let $\mathcal{M}$ be a $\kappa$-combinatorial model category. Trivial cofibrations in $\cm$ are not closed 
under $\kappa$-filtered colimits in general. On the other hand, weak equivalences are accessibly embedded, as follows from the arguments 
of Section 2 and Proposition \ref{classical-estimate}. These arguments required that both cofibrations and trivial cofibrations are cofibrantly 
generated by sets of morphisms between $\kappa$-presentable objects. A more direct proof can be found in \cite[Proposition 7.3]{D}. We include a  slightly improved version of that proof which only requires this for the cofibrations. 
Furthermore, the proof that follows does not require that $\mathcal{M}$ is locally presentable. 

\begin{proposition} \label{acc-embedded}
Let $\mathcal{M}$ be a combinatorial model category and $\mathcal{I}$ a generating set of cofibrations between 
$\kappa$-presentable objects. Then the class of weak equivalences in $\mathcal{M}$ is closed 
under $\kappa$-filtered colimits in $\mathcal{M}^{\to}$. 
\end{proposition}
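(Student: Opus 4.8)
The plan is to show that weak equivalences are closed under $\kappa$-filtered colimits by exploiting the characterization of weak equivalences via lifting properties that comes from cofibrant generation. The key idea is that a morphism $f$ is a weak equivalence if and only if it admits a (cofibration, trivial fibration)-type certificate detectable by the generating cofibrations $\mathcal{I}$. More precisely, using the (trivial cofibration, fibration)-factorization together with the two-out-of-three property, one reduces the problem to understanding when a fibration is a weak equivalence, i.e.\ a \emph{trivial} fibration, and trivial fibrations are exactly the morphisms with the right lifting property against all of $\mathcal{I}$.

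First I would recall the standard fact that in a model category a morphism $f$ is a weak equivalence precisely when, in some (equivalently any) factorization $f = p\,j$ with $j$ a trivial cofibration and $p$ a fibration, the fibration $p$ is a trivial fibration; and that $p$ is a trivial fibration if and only if $p$ has the right lifting property with respect to every $i \in \mathcal{I}$. The strategy is to encode being a weak equivalence as a lifting/factorization condition against the $\kappa$-presentable data in $\mathcal{I}$, and then observe that such conditions are preserved under $\kappa$-filtered colimits because the domains and codomains of the maps in $\mathcal{I}$ are $\kappa$-presentable. The reduction to fibrations is crucial since the class of fibrations (being defined by right lifting against the generating trivial cofibrations) behaves well, but I want to avoid assuming the trivial cofibrations are generated by $\kappa$-presentable maps, so I would instead detect weak equivalences through the trivial-fibration part after factoring.

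The main technical step, and the one I expect to be the principal obstacle, is to commute the factorizations and the lifting tests with the $\kappa$-filtered colimit. Concretely, given a $\kappa$-filtered diagram $(f_d \colon X_d \to Y_d)_{d \in D}$ of weak equivalences with colimit $f \colon X \to Y$, I would want to produce a factorization of the colimit exhibiting $f$ as a trivial fibration composed with a weak equivalence, and verify the trivial-fibration lifting condition by testing against $i \in \mathcal{I}$. Because each $i$ has $\kappa$-presentable domain and codomain, any square from $i$ into the colimit $f$ factors through some stage $f_d$ (using that a $\kappa$-presentable object sees a $\kappa$-filtered colimit one stage at a time), and a lift existing at that stage transports to a lift into the colimit. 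The delicate point is that the factorization into a trivial cofibration followed by a fibration is not strictly functorial or colimit-preserving without care, so I would use instead the characterization purely in terms of right lifting against $\mathcal{I}$ after first reducing $f$ (and each $f_d$) to a map whose two-out-of-three comparison is controlled.

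An alternative and cleaner route, which I suspect the authors take, is to argue directly: express each weak equivalence $f_d$ through its comparison with a fibrant-fibration replacement and use that the subcategory cut out by the lifting conditions against the $\kappa$-presentable set $\mathcal{I}$ is automatically closed under $\kappa$-filtered colimits, since the accessibility of the lifting condition is governed entirely by the presentation rank of the test maps $i$. The heart of the matter is therefore the standard lemma that, for a set $\mathcal{S}$ of morphisms between $\kappa$-presentable objects, the class of morphisms with the right lifting property against $\mathcal{S}$ is closed under $\kappa$-filtered colimits in $\cm^{\to}$; I would prove this by the one-stage factorization argument for $\kappa$-presentable test objects and then assemble the weak-equivalence statement from the model-category two-out-of-three and the factorization axioms, being careful that only the cofibrations need a $\kappa$-presentable generating set.
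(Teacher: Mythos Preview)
You have correctly isolated one of the two ingredients: since the generating cofibrations $\mathcal{I}$ have $\kappa$-presentable domains and codomains, the class of trivial fibrations $\mathcal{I}^{\square}$ is closed under $\kappa$-filtered colimits in $\mathcal{M}^{\to}$. This is indeed used in the paper's proof. The gap is in the other half. You want to factor each $f_d$ as a trivial cofibration $j_d$ followed by a (trivial) fibration $p_d$ and then take colimits of the two pieces separately. You recognize the difficulty---the pointwise factorizations need not be coherent, and even if they are (say via a functorial factorization), you have no control over $\operatorname{colim}_d j_d$: you are explicitly avoiding the hypothesis that the generating \emph{trivial} cofibrations lie between $\kappa$-presentable objects, so you cannot argue that a $\kappa$-filtered colimit of trivial cofibrations is again one. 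Your proposal acknowledges this as ``the delicate point'' but does not resolve it; the phrases ``reducing $f$ \ldots\ to a map whose two-out-of-three comparison is controlled'' and ``express each $f_d$ through its comparison with a fibrant-fibration replacement'' are not an argument.

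The paper's device for closing this gap is to pass to the projective model structure on $\mathcal{M}^C$, where $C$ is the indexing $\kappa$-filtered category. The diagram $(f_d)$ is then a \emph{single} weak equivalence in $\mathcal{M}^C$, and one factors it \emph{once} in that model structure as a projective trivial cofibration $j$ followed by a projective fibration $p$; since projective weak equivalences and fibrations are pointwise, $p$ is a pointwise trivial fibration. Now $\operatorname{colim}_C \colon \mathcal{M}^C \to \mathcal{M}$ is left Quillen, so $\operatorname{colim}_C j$ is a trivial cofibration in $\mathcal{M}$ with no further hypothesis needed; and $\operatorname{colim}_C p$ is a trivial fibration by your $\mathcal{I}^{\square}$ argument. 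The passage to $\mathcal{M}^C$ is precisely what produces a coherent factorization whose left half is controlled by the left Quillen property rather than by any presentability assumption on the generating trivial cofibrations.
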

\begin{proof} 
Let $C$ be a small $\kappa$-filtered category and consider the category of $C$-diagrams 
$\mathcal{M}^C$ with the projective model structure where weak equivalences and fibrations 
are defined pointwise. Then the colimit functor 
$$\mathrm{colim}_C: \mathcal{M}^C \to \mathcal{M}$$
is a left Quillen functor. It is required to show that the colimit functor preserves weak equivalences. 
Every weak equivalence in $\mathcal{M}^C$ can be written as a composition of a projective trivial cofibration 
and a pointwise trivial fibration. The colimit of a projective trivial cofibration is a trivial cofibration 
in $\mathcal{M}$ because $\mathrm{colim}_C$ is left Quillen. Since both domains and codomains of the morphisms 
in $\mathcal{I}$ are $\kappa$-presentable, it follows that the class of trivial fibrations $\mathcal{I}^{\square}$ 
is closed under $\kappa$-filtered colimits. This means that $\mathrm{colim}_C$ preserves trivial fibrations too, 
and then the result follows.  
\end{proof}

We now discuss our main result on an accessibility estimate for the class of weak equivalences. The proof of this requires additional 
assumptions on the combinatorial model category. 

First, we recall that given an object $X$ in a model category, a cylinder object for $X$ is a factorization of the codiagonal morphism
$$X \bigsqcup X \xrightarrow{\quad (i_0,i_1) \quad} \cyl(X) \xrightarrow{\quad p \quad} X$$
such that $(i_0,i_1)$ is a cofibration and $p$ is a weak equivalence. Cylinder objects produce factorizations of morphisms
via the classical mapping cylinder construction. In detail, given a morphism $f: X \to Y$ between cofibrant objects, the mapping cylinder 
$M_f$ is defined as the pushout
\[
\xymatrix{
& X \ar[r]^f \ar[d]^{i_0} & Y \ar[d] \ar[dr]^{\mathrm{id}} &\\
X \ar[r]^(.4){i_1} & \cyl(X) \ar@/_1pc/[rr]_{fp} \ar[r]^{\overline{f}} & M_f \ar[r]^{q} & Y
}
\]
and $X \stackrel{\overline{f}i_1}{\longrightarrow} M_f \stackrel{q}{\longrightarrow} Y$ is the mapping cylinder factorization of $f$ into a cofibration followed by a weak equivalence.

Secondly, we introduce some terminology. We say that a morphism $i: A \to B$ in a model category is a \textit{formal cofibration} if every pushout diagram
$$
\xymatrix{
A \ar[r]^i \ar[d]^f & B \ar[d]^g \\
X \ar[r] & Y
}
$$
where $X$ is cofibrant, is also a homotopy pushout. In particular, if $f$ is a weak equivalence, then so is $g$. It is well-known that every cofibration
with cofibrant domain is a formal cofibration (see, e.g., \cite[Proposition A.2.4.4]{L}). Moreover, if the model category is left proper, then every cofibration 
is a formal cofibration. 

\begin{theorem} \label{thmA}
Let $\mathcal{M}$ be a $\kappa$-combinatorial model category. Suppose that the following are satisfied:
\begin{itemize}
\item[(i)] there is a set $\mathcal{A}$ of $\kappa$-presentable cofibrant objects such that every object in $\mathcal{M}$ 
is a $\kappa$-filtered colimit of object in $\mathcal{A}$. 
\item[(ii)] every $\kappa$-presentable cofibrant object in $\mathcal{A}$ has a (not necessarily functorial) cylinder object which is again $\kappa$-presentable.
\item[(iii)] if a composite morphism $i' = q i$ is a cofibration between cofibrant objects, then $i$ is a formal cofibration. 
\end{itemize}
Then the full subcategory $\cw$ of $\mathcal{M}^{\to}$ spanned by the class of weak equivalences is $\kappa$-accessible.
\end{theorem}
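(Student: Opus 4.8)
The plan is to verify the two conditions that spell out $\kappa$-accessibility separately. The first of them—closure of $\cw$ under $\kappa$-filtered colimits in $\cm^{\to}$—is exactly Proposition \ref{acc-embedded} and uses none of (i)--(iii). So the whole content lies in the factorization condition: every square from a morphism $g\colon K\to L$ between $\kappa$-presentable objects to a weak equivalence $f\colon X\to Y$ should factor through a weak equivalence between $\kappa$-presentable objects. I would first reduce this to the case $K,L\in\ca$. Indeed, by (i) every $\kappa$-presentable object is a retract of an object of $\ca$ (its identity factors through a stage of a $\kappa$-filtered presentation by objects of $\ca$), so every such $g$ is a retract in $\cm^{\to}$ of a morphism $g^{+}\colon A\to B$ with $A,B\in\ca$; and the factorization property for $g$ follows from that for $g^{+}$, since a factorization of the composed square $g^{+}\to g\to f$ through a $\kappa$-presentable weak equivalence can be precomposed with the section $g\to g^{+}$. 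In particular every $\kappa$-presentable object is cofibrant, and by (ii) the objects $K,L\in\ca$ now carry $\kappa$-presentable cylinder objects.

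Granting $K,L\in\ca$, I would replace $g$ by a cofibration. Using the $\kappa$-presentable cylinder $\cyl(K)$, form the mapping cylinder $M_g$ of $g$: the pushout of $\cyl(K)\xleftarrow{\,i_0\,}K\xrightarrow{\,g\,}L$ is again $\kappa$-presentable and cofibrant, and yields the factorization $K\xrightarrow{\,\iota\,}M_g\xrightarrow{\,q\,}L$ with $\iota=\overline{g}\,i_1$ a cofibration between cofibrant objects and $q$ a weak equivalence, $q\iota=g$. Since $fa=bg=(bq)\iota$, the original square reorganizes into a square whose left edge is the cofibration $\iota$ and whose bottom edge is $bq\colon M_g\to Y$. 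To this I apply Corollary \ref{triv-cof-fact}: it produces $\kappa$-presentable objects $K',L'$, a trivial cofibration $j\colon K'\to L'$, comparison maps $u_1\colon K\to K'$, $l_1\colon M_g\to L'$, $u_2\colon K'\to X$, and a map $L'\to Y$, such that $ju_1=l_1\iota$, $u_2u_1=a$, the composite $M_g\xrightarrow{l_1}L'\to Y$ equals $bq$, and the right-hand square commutes.

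At this point $L'$ receives $M_g$ rather than $L$, so I must rectify the direction. I would form the pushout $L'':=L'\cup_{M_g}L$ of $L'\xleftarrow{\,l_1\,}M_g\xrightarrow{\,q\,}L$, with structure maps $\rho\colon L'\to L''$ and $\beta\colon L\to L''$. One then checks directly that $h:=\rho j\colon K'\to L''$, the maps $u_1\colon K\to K'$ and $u_2\colon K'\to X$, the map $\beta\colon L\to L''$, and the map $L''\to Y$ induced by $L'\to Y$ and by $b$ (which agree on $M_g$ since $M_g\xrightarrow{l_1}L'\to Y$ equals $bq$), assemble into the required factorization. The left square commutes because $hu_1=\rho j u_1=\rho l_1\iota=\beta q\iota=\beta g$, while the right square and the two composites to $X$ and $Y$ are immediate. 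The objects $K',L''$ are $\kappa$-presentable, and $h=\rho j$ is a weak equivalence provided $\rho$ is one.

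The crux—and the only place where (iii) together with cofibrancy is genuinely used—is precisely that $\rho\colon L'\to L''$ is a weak equivalence: $\rho$ is the cobase change of the weak equivalence $q$ along the comparison map $l_1\colon M_g\to L'$, so it suffices that the square defining $L''$ be a homotopy pushout, i.e.\ that $l_1$ be a formal cofibration. In the cellular or uncountable case $l_1$ can be arranged to be an honest cofibration (Remark \ref{alternative-arg}), whence it is formal; in general $l_1$ is only an initial segment of such a cofibration, and this is exactly the configuration addressed by (iii). Verifying that $l_1$ satisfies the hypothesis of (iii)—that it becomes a cofibration between cofibrant objects after a suitable post-composition arising from the $\kappa$-directed colimit decomposition underlying Corollary \ref{triv-cof}—is the main technical obstacle, and is where the fat small-object argument and the formal-cofibration hypothesis (iii) do the essential work.
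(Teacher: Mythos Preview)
Your overall architecture matches the paper's proof closely: closure under $\kappa$-filtered colimits via Proposition~\ref{acc-embedded}, reduction to $K,L\in\ca$, the mapping cylinder replacement $g=q\iota$, an application of Corollary~\ref{triv-cof-fact} to the cofibration $\iota$, and finally the pushout $L''=L'\cup_{M_g}L$ to redirect from $M_g$ to $L$. The reduction to $K,L\in\ca$ via retracts is a minor variant of the paper's pushout argument and works fine.

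The genuine gap is at the end, and you have correctly located it but not closed it. You need $l_1\colon M_g\to L'$ to be a formal cofibration so that the cobase change $\rho$ of the weak equivalence $q$ is again a weak equivalence. Your proposed route---that $l_1$ becomes a cofibration after post-composition by a leg of the $\kappa$-directed presentation underlying Corollary~\ref{triv-cof}---does not work in general: those colimit legs are cofibrations only in the cellular or uncountable-$\kappa$ situations of Remark~\ref{alternative-arg}, so in the general case there is no visible $q'$ with $q'l_1$ a cofibration. The paper inserts one extra, entirely elementary step that you are missing: \emph{before} invoking Corollary~\ref{triv-cof-fact}, functorially factor $u\colon K\to X$ and $vq\colon M_g\to Y$ as (cofibration, trivial fibration), obtaining a square $\iota\to f'$ with $f'\colon X'\to Y'$ a weak equivalence and, crucially, with the bottom map $l'\colon M_g\to Y'$ a cofibration between cofibrant objects. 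Applying Corollary~\ref{triv-cof-fact} to this square yields $l'=l_2l_1$, so now $l_1$ is the first factor of a cofibration between cofibrant objects and hypothesis~(iii) applies directly to conclude that $l_1$ is a formal cofibration. The remainder of your argument then goes through verbatim. In short, the ``main technical obstacle'' you flag is dissolved not by a deeper appeal to the fat small-object argument, but by a preliminary (cofibration, trivial fibration) factorization that manufactures the hypothesis of~(iii) for free.
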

\begin{proof}
By Proposition \ref{acc-embedded}, the class of weak equivalences in $\mathcal{M}$ admits $\kappa$-filtered colimits. Thus, it suffices to show that every weak equivalence is a $\kappa$-filtered colimit of weak equivalences between $\kappa$-presentable objects. 

Let $f$ be a weak equivalence. Let $\cm^{\to}_{\kappa}$ denote the full subcategory of morphisms between $\kappa$-presentable objects in $\cm^{\to}$, $\cm^{\to, \mathrm{cof}}_{\kappa}$ the full subcategory of morphisms between $\kappa$-presentable cofibrant objects, and $w\cm^{\to}_{\kappa}$ the full subcategory of weak equivalences between $\kappa$-presentable objects. There are inclusions of slice categories as follows,
$$
\xymatrix{
(\cm^{\to, \mathrm{cof}}_{\kappa} \downarrow f) \ar[rd]^{U} & & \\
& (\cm^{\to}_{\kappa} \downarrow f) \ar[r]^(.6){F} & \cm^{\to} \\
(w \cm^{\to}_{\kappa}  \downarrow f) \ar[ur]_{V} & &
}
$$
where $F$ is the canonical diagram of $f$ with respect to $\cm^{\to}_{\kappa}$. This diagram is $\kappa$-filtered and 
has colimit $f$. By (i), it is easy to see that the functor $U$ is cofinal. Then the composite diagram $F \circ U$ is $\kappa$-filtered and also has $f$ as a colimit. Analogously, it suffices to show that $V$ is cofinal.

More precisely, it suffices to show that every commutative square 
\begin{equation}  \label{square-problem}
\xymatrix{
K \ar[r]^u \ar[d]^g & X \ar[d]^f \\
L \ar[r]^v & Y
} \tag{\textasteriskcentered}
\end{equation}
where $K$ and $L$ are $\kappa$-presentable and $f$ is a weak equivalence, admits a factorization as 
follows
\[
\xymatrix{
K \ar[r] \ar[d]^g & K^\prime \ar[d]^h \ar[r] & X \ar[d]^f \\
L \ar[r] & L^\prime \ar[r] & Y
} 
\]
where $h$ is a weak equivalence between $\kappa$-presentable objects. The difference with Corollary \ref{triv-cof-fact} is that $g$ is not necessarily a cofibration. 

In fact, we can assume that the objects $K$ and $L$ are in $\mathcal{A}$. To see this, note that having a square \eqref{square-problem}, we can use (i) to factorize 
$$u:K \xrightarrow{\quad u_1 \quad} K_1 \xrightarrow{\quad u_2 \quad} X$$
where $K_1$ is $\kappa$-presentable and in $\mathcal{A}$. Then $L^\prime$ in the pushout
\[
\xymatrix{
K \ar[r]^{u_1} \ar[d]^g & K_1 \ar[d]^{\overline{g}} \\
L \ar[r]^{\overline{u}_1} & L^\prime
}
\]
is $\kappa$-presentable with a unique morphism $w:L^\prime\to Y$ such that $w\overline{u}_1=v$ and $w\overline{g}=fu_2$. Using (i) again, we get
a factorization
$$w:L^\prime \xrightarrow{\quad w_1 \quad} L_1 \xrightarrow{\quad w_2 \quad} Y$$
where $L_1$ is $\kappa$-presentable and in $\mathcal{A}$. For $g^\prime=w_1\overline{g}$, we get a factorization
\[
\xymatrix{
K \ar[r]^{u_1} \ar[d]^g & K_1 \ar[d]^{g^\prime} \ar[r]^{u_2} & X \ar[d]^f \\
L \ar[r]_{w_1\overline{u}_1} & L_1 \ar[r]_{w_2} & Y
} 
\]
 
Thus, we may restrict to squares \eqref{square-problem} with $K$ and $L$ in $\mathcal{A}$. Then consider the mapping cylinder factorization of $g: K \to L$, 
$$K \stackrel{i}{\to} M_g:= \cyl(K) \cup_K L \stackrel{q}{\to} L.$$
Since $K$ and $L$ are cofibrant, $i$ is a cofibration and $q$ is a weak equivalence. Moreover, by (ii), $M_g$ is $\kappa$-presentable and cofibrant. 

There are functorial (cofibration, trivial fibration)-factorizations of the maps $u$ and $vq$ and therefore a factorization of the morphism $(u,vq): i \to f$, 
\[
\xymatrix{
K \ar@{>->}[r]^{u'} \ar[d]^i & X^\prime \ar[d]^{f'} \ar@{->>}[r]^{\sim} & X \ar[d]^{f} \\
M_g \ar@{>->}[r]^{l'} & Y^\prime \ar@{->>}[r]^{\sim} & Y
} \tag{I}
\]
where $u'$ and $l'$ are cofibrations. By the ``2-out-of-3'' property, the morphism $f'$ is again a weak equivalence. By Corollary \ref{triv-cof-fact}, there is a 
factorization
\begin{equation} \label{first-fact}
\xymatrix{
K \ar[r]^{u_1} \ar[d]^i & K^\prime \ar[d]^j \ar[r]^{u_2} & X' \ar[d]^{f'} \\
M_g \ar[r]^{l_1} & L^\prime \ar[r]^{l_2} & Y'
} \tag{II}
\end{equation}
where $u' = u_2 u_1$, $l'= l_2 l_1$ and $j$ is a trivial cofibration between $\kappa$-presentable objects. Therefore we have a diagram as follows, 
\[
\xymatrix{
& K \ar[d]^i \ar[r]^{u_1} & K' \ar[d]^j \ar[r]^{u_2} & X' \ar[r]^{\sim} \ar[d]^{f'} & X  \ar[d]^{f} \\
\cyl(K) \ar[r]^{\overline{g}} \ar[d]^p & M_g \ar[d]^{q}_{\sim} \ar[r]^{l_1} & L^\prime \ar[r]^{l_2} & Y' \ar[r]^{\sim} & Y \\
K \ar[r]^{g} & L \ar@/_1pc/[rrru]_{v} &&& 
} \tag{III}
\]
where the square on the left is a pushout. To obtain the required factorization, it remains to extend 
the factorization \eqref{first-fact} along the weak equivalence $q: M_g \to L$. We consider the pushout $L''$:
\[
 \xymatrix{
\cyl(K) \ar[d]_{\sim} \ar[r] & M_g \ar[r]^{l_1} \ar[d]_{\sim}^{q} & L' \ar[d]^{j'} \ar[rd] \\
 K \ar[r] & L \ar[r]^{v_1} & L'' \ar[r] & Y
}\tag{IV}
\]
Since $l': M_g \to L' \to Y'$ is a cofibration between cofibrant objects, by construction, it follows by (iii) that the morphism 
$l_1: M_g \to L'$ is a formal cofibration. Therefore  the morphism $j'$ is again a weak equivalence. 

Therefore we obtain the required factorization
\[
\xymatrix{
K \ar[d]^g \ar[r]^{u_1} & K' \ar[d]^{j'j} \ar[r] & X \ar[d]^{f} \\
L \ar[r]^{v_1} & L'' \ar[r] & Y 
}\tag{V}
\]
\end{proof}

\begin{remark}(acyclic objects)
Under certain assumptions, it was shown in \cite[5.6]{MRV} that every acyclic (=weakly trivial) object in a $\kappa$-combinatorial 
model category $\cm$ is a $\kappa$-directed colimit of $\kappa$-presentable acyclic objects, i.e., that the category of acyclic objects is $\kappa$-accessible. 
The assumptions were that the terminal object $1$ is $\kappa$-presentable and the canonical morphism $X \to 1$ splits by a cofibration 
for every acyclic object $X$ in $\cm$. In $\mathcal{SS}et$, this recovers a theorem of Joyal and Wraith \cite{JW} -- any acyclic simplicial set is a 
directed colimit of finite acyclic simplicial sets. It seems that this result about objects is easier to prove than the result about 
weak equivalences, but at the same time, the former result does not follow from the latter. This can be demonstrated by the following elementary example:
let $\mathcal{S}et$ be the category of sets and consider the category $\mathcal{S}et^{\omega^{\mathrm{op}}}$ with the discrete model structure where
every morphism is a cofibration and the weak equivalences are the isomorphisms. This model category is finitely combinatorial and the terminal 
object $1$ is the only acyclic object. Since $1$ is not finitely presentable in $\mathcal{S}et^{\omega^{\mathrm{op}}}$, it cannot be a directed colimit 
of finitely presentable acyclic objects. On the other hand, the identity morphism of $1$ is clearly a directed colimit of weak equivalences between finitely 
presentable objects.
\end{remark}

\section{Examples}

We mention some immediate corollaries of Theorem \ref{thmA}. First, we consider the standard model category of simplicial sets $\mathcal{SS}et$
(see, e.g., \cite{Ho}). The cofibrations are the monomorphisms, the weak equivalences are the maps whose geometric realizations are weak homotopy
equivalences and the fibrations are the Kan fibrations. 

\begin{corollary} \label{corA}
The full subcategory $\mathcal{W}$ of $\mathcal{SS}et^{\to}$ spanned by the class of weak equivalences is finitely accessible.
\end{corollary}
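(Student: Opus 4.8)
The plan is to deduce Corollary \ref{corA} directly from Theorem \ref{thmA} by verifying that $\mathcal{SS}et$, with its standard model structure, satisfies the three hypotheses (i)--(iii) with $\kappa = \aleph_0$. We already know from the introduction that $\mathcal{SS}et$ is finitely combinatorial, so the only work is to check the three conditions. I would organize the proof as three short verifications, concluding with a single invocation of the theorem.

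First I would address (i). In $\mathcal{SS}et$ \emph{every} object is cofibrant, since the cofibrations are the monomorphisms and $\emptyset \to X$ is always a monomorphism. The finitely presentable simplicial sets are exactly the finite ones (those with finitely many nondegenerate simplices), and every simplicial set is a filtered (indeed directed) colimit of its finite subobjects. So I would take $\mathcal{A}$ to be a skeleton of the category of finite simplicial sets; these are $\aleph_0$-presentable and cofibrant, and every simplicial set is an $\aleph_0$-filtered colimit of objects of $\mathcal{A}$. For (ii), I would use the standard cylinder $\cyl(K) = K \times \Delta^1$: the product of a finite simplicial set with the finite simplicial set $\Delta^1$ is again finite, hence $\aleph_0$-presentable, and $K \sqcup K \to K \times \Delta^1 \to K$ is the usual cylinder object (the inclusion of the two endpoints is a monomorphism, and the projection is a homotopy equivalence, hence a weak equivalence). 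So every finite simplicial set has a finite cylinder object.

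The main point to check is (iii): if a composite $i' = qi$ is a cofibration between cofibrant objects, then $i$ is a formal cofibration. Here I expect the argument to be essentially automatic because $\mathcal{SS}et$ is \emph{left proper}. Indeed, the discussion preceding Theorem \ref{thmA} records that in a left proper model category every cofibration is a formal cofibration. Since $\mathcal{SS}et$ is left proper (every object being cofibrant), \emph{every} monomorphism is a formal cofibration, and in particular any $i$ arising as in (iii) is a formal cofibration regardless of the factorization. This is the step I would flag as the one to get right, but in the simplicial case left properness trivializes it; the hypothesis (iii) is really designed for model categories that are not left proper.

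Having verified (i)--(iii), I would simply apply Theorem \ref{thmA} with $\kappa = \aleph_0$ to conclude that $\mathcal{W} \subseteq \mathcal{SS}et^{\to}$ is $\aleph_0$-accessible, i.e.\ finitely accessible. The anticipated obstacle is thus not conceptual but a matter of correctly matching the standard facts about $\mathcal{SS}et$ (cofibrant = all objects, finitely presentable = finite, the product cylinder, left properness) to the exact wording of the three hypotheses; once that dictionary is in place the corollary is immediate.
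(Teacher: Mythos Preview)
Your proposal is correct and follows exactly the paper's own approach: verify hypotheses (i)--(iii) of Theorem~\ref{thmA} for $\mathcal{SS}et$ with $\kappa=\aleph_0$ (all objects cofibrant, the cylinder $K\times\Delta^1$ preserves finite presentability, and monomorphisms are formal cofibrations by left properness), then invoke the theorem. The only implicit step worth making explicit in (iii) is that if the composite $qi$ is a monomorphism then $i$ is already a monomorphism, hence a cofibration; after that, left properness finishes the argument as you say.
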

\begin{proof}
It is well-known that the model category is finitely combinatorial and the standard cylinder functor $X \mapsto X \times \Delta^1$ preserves 
finitely presentable objects. The cofibrations are the monomorphisms and therefore every object is cofibrant. Then the assumptions of Theorem 
\ref{thmA} are clearly satisfied and the claim follows. 
\end{proof}

The same argument applies more generally to Cisinski model categories. A \textit{Cisinski model category} is a combinatorial model category 
whose underlying category is a Grothendieck topos and whose cofibrations are the monomorphisms. A systematic study of these model structures 
can be found in \cite{C}. We note that every Grothendieck topos $\ck$ is locally presentable and its class of monomorphisms is cofibrantly 
generated by a set of morphisms (see \cite[1.29]{C}, \cite[1.12]{B}). The following is an immediate application of Theorem \ref{thmA} to 
general Cisinski model categories. 

\begin{corollary} \label{corB}
Let $\mathcal{M}$ be a Cisinski model category. Suppose that $\mathcal{M}$ is $\kappa$-combinatorial and there is a cylinder functor 
$\cyl: \mathcal{M} \to \mathcal{M}$ which preserves $\kappa$-presentable objects. Then the full subcategory $\mathcal{W}_{\mathcal{M}}$ of $\mathcal{M}^{\to}$ 
spanned by the class of weak equivalences is $\kappa$-accessible.
\end{corollary}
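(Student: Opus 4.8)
The plan is to show that a Cisinski model category $\mathcal{M}$ satisfying the stated hypotheses fulfills conditions (i)--(iii) of Theorem \ref{thmA}, and then simply to invoke that theorem. The structural features I will use are that the cofibrations of $\mathcal{M}$ are \emph{exactly} the monomorphisms and that $\mathcal{M}$ is locally $\kappa$-presentable.

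First I would dispose of (i). Since the underlying category of $\mathcal{M}$ is a Grothendieck topos, its initial object $0$ is strict and the unique map $0 \to X$ is a monomorphism for every object $X$; hence every object is cofibrant. Taking $\mathcal{A}$ to be a set of representatives for the isomorphism classes of $\kappa$-presentable objects, local $\kappa$-presentability guarantees that every object of $\mathcal{M}$ is a $\kappa$-filtered colimit of objects of $\mathcal{A}$, and each such object is cofibrant. This gives (i). Condition (ii) is then immediate from the hypothesis: for $A \in \mathcal{A}$ the cylinder functor produces a cylinder object $\cyl(A)$, and since $\cyl$ preserves $\kappa$-presentable objects and $A$ is $\kappa$-presentable, $\cyl(A)$ is again $\kappa$-presentable. (Condition (ii) asks only for a not-necessarily-functorial cylinder, so a functorial one certainly suffices.)

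The step carrying the real content is (iii), and here the identification of cofibrations with monomorphisms is decisive. Suppose $i' = qi$ is a cofibration between cofibrant objects. Then $i'$ is a monomorphism, and because $i' = q \circ i$, the first factor of a monomorphism is again a monomorphism: if $ia = ib$ then $(qi)a = (qi)b$, whence $a = b$. Thus $i$ is a monomorphism, hence a cofibration. As every object is cofibrant, $i$ is in particular a cofibration with cofibrant domain, and therefore a formal cofibration by the well-known fact recalled just before Theorem \ref{thmA}. This establishes (iii) with no further work.

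The only points needing any care are in (i)---the appeal to the topos-theoretic fact that $0$ is strict, so that every object is cofibrant---and the routine bookkeeping that a cylinder functor delivers honest cylinder objects in the sense required by (ii); neither is an obstacle of substance. The genuinely load-bearing observation is the left-cancellation argument in (iii), which reduces the most delicate hypothesis of Theorem \ref{thmA} to the elementary fact that a right factor of a monomorphism is a monomorphism. With (i)--(iii) verified, Theorem \ref{thmA} yields directly that $\mathcal{W}_{\mathcal{M}}$ is $\kappa$-accessible.
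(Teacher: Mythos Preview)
Your proposal is correct and follows exactly the approach the paper intends: the paper states the corollary as ``an immediate application of Theorem \ref{thmA}'' without a separate proof, and your verification of (i)--(iii) mirrors the reasoning given explicitly in the proof of Corollary \ref{corA} (every object is cofibrant because cofibrations are the monomorphisms, the cylinder hypothesis handles (ii), and (iii) follows since a first factor of a monomorphism is a monomorphism and hence a cofibration with cofibrant domain).
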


For a ring $R$, let $\mathrm{Ch}(R)$ denote the projective model category of chain complexes of (left) $R$-modules \cite[Theorem 2.3.11]{Ho}. The 
weak equivalences are the quasi-isomorphisms and the fibrations are the surjections. A map is a cofibration if it is 
degreewise a split monomorphism and has
cofibrant cokernel. Every bounded chain complex of finitely generated projective $R$-modules is finitely presentable and cofibrant. We refer to \cite{Ho} for the proofs of these assertions. 

We recall the classical cylinder construction in $\mathrm{Ch}(R)$. Let $C(\Delta^1)_{\bullet}$ denote the chain complex which is concentrated in degrees 
$0$ and $1$ and defined by 
$$C(\Delta^1)_0 : = R \oplus R$$
$$C(\Delta^1)_1 : = R$$
$$\partial: C(\Delta^1)_1 \to C(\Delta^1)_0, \ \ x \mapsto (x, -x).$$
Let $S^0(R)_{\bullet}$ denote the chain complex concentrated in degree $0$ and defined by $R$. We have an obvious factorization of the codiagonal,
$$S^0(R)_{\bullet} \oplus S^0(R)_{\bullet} \to C(\Delta^1)_{\bullet} \to S^0(R)_{\bullet}.$$
More generally, given a chain complex $C_{\bullet}$, there is a factorization of the codiagonal as follows,
\begin{equation} \label{cx-cyl} 
C_{\bullet} \oplus C_{\bullet} \stackrel{i}{\to} \cyl(C_{\bullet}) : =  C(\Delta^1)_{\bullet} \otimes C_{\bullet} \stackrel{q}{\to} C_{\bullet}
\tag{Cyl}
\end{equation}
where $\otimes$ is the tensor product of chain complexes. More explicitly, $\cyl(C_{\bullet})_n$ is the $R$-module 
$C_n \oplus C_{n-1} \oplus C_n$ and the boundary map is defined by 
$$\partial(c_n, c_{n-1}, c'_n) = (\partial(c_n) + c_{n-1}, - \partial(c_{n-1}), \partial(c_n) - c_{n-1}).$$ 
The difficulty with the application of Theorem \ref{thmA} in this case is that cofibrant objects do not generate the whole category in general. 
As a result, we can conclude the finite accessibility of quasi-isomorphisms only for special cases of rings. 

\begin{corollary} \label{corC}
Let $R$ be a semi-simple ring. Then the full subcategory $\mathcal{W}_{\mathrm{qis}}$ of $\mathrm{Ch}(R)^{\to}$ spanned by the class of quasi-isomorphisms 
is finitely accessible.
\end{corollary}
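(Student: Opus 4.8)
The plan is to verify the three hypotheses of Theorem \ref{thmA} for $\cm = \mathrm{Ch}(R)$ when $R$ is semi-simple, and then invoke that theorem directly. The key simplification in the semi-simple case is that \emph{every} $R$-module is projective, hence every chain complex is degreewise projective. Moreover, over a semi-simple ring every short exact sequence of modules splits, so one expects every chain complex to be cofibrant: the point is that a bounded-below or arbitrary complex of projectives which is built from split monomorphisms with cofibrant cokernel is cofibrant, and semi-simplicity forces the relevant sequences to split degreewise. Thus the obstruction mentioned just before the statement --- that cofibrant objects need not generate $\mathrm{Ch}(R)$ in general --- disappears, because here \emph{all} objects are cofibrant.

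First I would check hypothesis (i). Since $\mathrm{Ch}(R)$ is finitely combinatorial (the generating cofibrations being built from the disks and spheres $S^{n}(R)_\bullet, D^{n}(R)_\bullet$, which are complexes of finitely generated free modules), the finitely presentable objects are, up to the usual description, the bounded complexes of finitely generated projective modules. I would take $\ca$ to be (a skeleton of) this set of bounded complexes of finitely generated projectives. Every object of $\mathrm{Ch}(R)$ is a filtered colimit of such finitely presentable subcomplexes, and I must argue that these finitely presentable objects are cofibrant --- which is exactly where semi-simplicity enters, guaranteeing that they are degreewise split monos with projective (hence cofibrant) cokernels. This gives a set $\ca$ of finitely presentable cofibrant objects generating $\mathrm{Ch}(R)$ under filtered colimits.

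Next I would verify hypothesis (ii) using the explicit cylinder \eqref{cx-cyl}: for $C_\bullet \in \ca$, the complex $\cyl(C_\bullet) = C(\Delta^1)_\bullet \otimes C_\bullet$ has $n$-th term $C_n \oplus C_{n-1} \oplus C_n$, so it is again a bounded complex of finitely generated projectives, hence again in (the closure of) $\ca$ and in particular finitely presentable. The map $i$ is a degreewise split mono and $q$ is a chain homotopy equivalence, so this is a genuine cylinder object, and it is finitely presentable as required. Finally, hypothesis (iii) should follow from left properness: $\mathrm{Ch}(R)$ is left proper (all objects being cofibrant makes every cofibration a cofibration with cofibrant domain, and the excerpt notes that such cofibrations are formal, and that left proper categories have every cofibration formal), so the factorization condition on $i' = qi$ is automatic.

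The main obstacle I expect is hypothesis (i), specifically the claim that every finitely presentable complex of finitely generated projectives is cofibrant, and more basically that every object of $\mathrm{Ch}(R)$ is cofibrant. The subtlety is that the model-category definition of cofibration in $\mathrm{Ch}(R)$ demands a degreewise split monomorphism \emph{with cofibrant cokernel}, which is an inductive condition on the cokernel; one must check that for semi-simple $R$ this inductive condition terminates, i.e.\ that complexes of projectives are indeed cofibrant in the projective model structure. For semi-simple $R$ this is standard --- every module is projective and every complex is a direct sum of its homology shifted, or at least splits into contractible pieces --- but it is the one place where the hypothesis on $R$ is genuinely used, and I would treat it carefully rather than as routine. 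Once all three hypotheses are in hand, Theorem \ref{thmA} applies with $\kappa = \aleph_0$ and yields that $\cw_{\mathrm{qis}}$ is finitely accessible.
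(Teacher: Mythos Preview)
Your plan is essentially the paper's own proof: verify (i)--(iii) of Theorem~\ref{thmA} for $\mathrm{Ch}(R)$ using semi-simplicity to get enough cofibrant generators, and use the explicit cylinder \eqref{cx-cyl} for (ii). Your handling of (i) and (ii) matches the paper's, and your identification of (i) as the place where semi-simplicity is essential is correct.

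There is, however, a small gap in your argument for (iii). Left properness tells you that every \emph{cofibration} is a formal cofibration, but hypothesis (iii) asks about a morphism $i$ for which only the composite $i' = qi$ is assumed to be a cofibration; $i$ itself need not be one a priori. To close the gap you must observe that, since cofibrations in $\mathrm{Ch}(R)$ are in particular monomorphisms and $i$ is the first factor of a monomorphism, $i$ is itself a monomorphism; then, over a semi-simple ring, every monomorphism of chain complexes is degreewise split with cofibrant cokernel, hence an honest cofibration, and now your left-properness argument applies. The paper compresses this into the single line ``every monomorphism is a formal cofibration,'' which is exactly the statement you need.
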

\begin{proof}
It is well-known that $\mathrm{Ch}(R)$ is finitely combinatorial (see the description in \cite[Theorem 2.3.11]{Ho}). The full 
subcategory of bounded chain complexes of finitely generated $R$-modules generates the category of chain complexes. Since $R$ is semi-simple, every $R$-module is projective, so $\mathrm{Ch}(R)$ is generated under filtered colimits by
finitely presentable cofibrant objects. Thus, condition (i) of Theorem \ref{thmA} is satisfied. For condition (ii), consider the 
cylinder construction defined above. If $C_{\bullet}$ is finitely presentable (and cofibrant), the cokernel $C[-1]$ of the 
map $i$ in \eqref{cx-cyl} is cofibrant and therefore \eqref{cx-cyl} is a cylinder object for $C_{\bullet}$. Moreover, $\cyl(C_{\bullet})$ is finitely presentable if $C_{\bullet}$ is. 
Finally, condition (iii) is satisfied since every monomorphism is a formal cofibration. 
\end{proof}

We emphasize that the finite accessibility in the general case of an arbitrary ring remains open. In connection with this, we make some comments about the existence of strong generators.  
Recall that a set of objects $\{S_{\alpha}\}$ in a ca\-te\-go\-ry $\cc$ is a strong generator if the functors $\cc(S_{\alpha}, -) : \cc \to \mathrm{Set}$ 
are jointly faithful and jointly conservative (= isomorphism-reflecting). As observed in \cite[6.2]{ARV}, 
this definition is equivalent to the one in \cite[0.6]{AR} (see also \cite[6.3]{ARV}). If a category $\ck$ is $\kappa$-accessible, then in 
particular it has a strong generator consisting of $\kappa$-presentable objects. Conversely, every cocomplete category with a strong generator of $\kappa$-presentable 
objects is $\kappa$-accessible (see \cite[1.20]{AR}) and, as a consequence, it is $\lambda$-accessible for all $\lambda\geq\kappa$.

It is easy to see that $\mathcal{W}_{\mathrm{qis}} \subset \mathrm{Ch}(R)^{\to}$ has a strong generator consisting of finitely presentable objects. An example 
is the union of the following sets of morphisms: 
\begin{itemize}
 \item Let $S^{n}(R)_{\bullet}$ be the chain complex which is concentrated in degree $n$, $n \in \mathbb{Z}$, and $S^n(R)_n = R$. Consider the identity maps of 
 these chain complexes. 
 \item Let $D^n(R)_{\bullet}$ be the chain complex which is concentrated in degrees $n$ and $n-1$ and the only non-trivial boundary map is the identity map of 
 $R$. Consider the maps $0 \to D^n(R)_{\bullet}$ for all $n \in \mathbb{Z}$. 
\end{itemize}

However, having a strong generator of $\kappa$-presentable objects is weaker than $\kappa$-accessibility in general. One would still need to demonstrate 
that for some strong generator the canonical diagrams with respect to this set of objects are $\kappa$-filtered (cf. \cite[1.20]{AR}). 

\begin{remark} Since it is not true in general that a $\kappa$-accessible category is 
also $\lambda$-accessible (see \cite[2.11]{AR}), one cannot expect that any category with $\kappa$-filtered colimits and a strong generator consisting of $\kappa$-presentable objects is $\kappa$-accessible. 
This abstract argument does not work for $\kappa=\aleph_0$. For an example in this case, consider the full subcategory $\ck$ of the category of sets with monomorphisms as morphisms, 
and consisting of the sets which are either one-element or infinite. This category has filtered colimits and a one-element set is finitely presentable and forms 
a strong generator. However, $\ck$ is not finitely accessible.
\end{remark}

The next example concerns the stable module category of $R$-modules for a Frobenius ring $R$. Let $\modR$ denote the category of $R$-modules. If $R$ is 
Frobenius, this has a model structure where the cofibrations are the monomorphisms, the fibrations are the epimorphisms and the weak equivalences are 
the stable equivalences \cite[Theorem 2.2.12]{Ho}. This is a combinatorial model category, but it is not finitely combinatorial in general. We refer to \cite{Ho} for a detailed account. 

\begin{corollary} \label{corD}
Let $R$ be a Noetherian Frobenius ring. Then the full subcategory $\mathcal{W}_{\mathrm{st}}$ of $\modR^{\to}$ spanned by the class of stable equivalences
is finitely accessible. 
\end{corollary}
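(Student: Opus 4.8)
The plan is to apply Theorem \ref{thmA} with $\kappa = \aleph_0$. Recall that in the Frobenius stable model structure on $\modR$ the cofibrations are the monomorphisms, the fibrations are the epimorphisms, and the weak equivalences are the stable equivalences; in particular every object is both cofibrant and fibrant, and the weakly trivial objects are exactly the projective ($=$ injective) modules. Since $\modR$ is locally finitely presentable, with the finitely presentable objects being the finitely presented modules, and since $R$ is Noetherian, the category is locally Noetherian: finitely presented $=$ finitely generated, and finite generation passes to submodules. One first checks that the model structure is then finitely combinatorial, i.e.\ that the monomorphisms, and likewise the trivial cofibrations, are cofibrantly generated by maps between finitely generated modules; granting this, it remains to verify hypotheses (i)--(iii).

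For (i), take $\mathcal{A}$ to be a set of representatives of the finitely presented modules. These are all cofibrant (every object is), and every module is a filtered colimit of finitely presented modules because $\modR$ is locally finitely presentable; so (i) holds. For (iii), every object is cofibrant, so the model category is left proper; since every cofibration with cofibrant domain is a formal cofibration, every monomorphism is a formal cofibration, and (iii) follows at once (if $i' = q i$ is a cofibration, then $i$ is a monomorphism, hence a formal cofibration).

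The heart of the argument is (ii): each finitely presented module $M$ must admit a finitely presented cylinder object. Here I would use that $R$, being Noetherian and self-injective, is quasi-Frobenius, so that each $R^n$ is injective and $\mathrm{Hom}_R(-,R)$ is an exact duality under which finitely generated modules are reflexive. Choosing a surjection $R^n \twoheadrightarrow \mathrm{Hom}_R(M,R)$ and dualizing yields a monomorphism $\iota : M \cong M^{\ast\ast} \rightarrowtail R^n$ into a finitely generated projective-injective module. I would then set $\cyl(M) = M \oplus R^n$, with $(i_0,i_1) : M \oplus M \to \cyl(M)$, $(m_1,m_2) \mapsto (m_1 + m_2, \iota(m_2))$, and $p : \cyl(M) \to M$ the first projection. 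Then $(i_0,i_1)$ is a monomorphism, hence a cofibration, because $\iota$ is, while $p$ is a split epimorphism whose kernel $R^n$ is projective, so $p$ becomes an isomorphism in the stable category and is a stable equivalence. Thus $\cyl(M)$ is a cylinder object for $M$, and it is finitely presented since $M$ and $R^n$ are. This verifies (ii), and Theorem \ref{thmA} then gives that $\mathcal{W}_{\mathrm{st}}$ is $\aleph_0$-accessible, i.e.\ finitely accessible.

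The main obstacle is precisely (ii): the existence of a \emph{finitely presented} cylinder rests on embedding a finitely generated module into a finitely generated projective-injective one, which can fail for a merely self-injective ring and is exactly what the Noetherian (equivalently, quasi-Frobenius) hypothesis supplies, through the reflexivity of finitely generated modules. A secondary point also requiring the Noetherian hypothesis is the finite combinatoriality of the model structure, without which Theorem \ref{thmA} could not be applied at $\kappa = \aleph_0$.
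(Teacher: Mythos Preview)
Your proposal is correct and follows the same overall strategy as the paper: apply Theorem~\ref{thmA} with $\kappa=\aleph_0$, note that (i) and (iii) are immediate since every object is cofibrant, and concentrate the work on (ii). The difference lies only in how a finitely presentable cylinder is produced. The paper cites \cite[Lemma 4.2]{KR} for a cylinder of the shape $K^\ast\oplus K^\ast\oplus K$, where $K\to K^\ast$ is a weak reflection into an injective, and then uses that injective $=$ projective $=$ flat (via \cite[Theorem 3.1.17]{EJ}) together with Lazard's theorem to factor $K\to K^\ast$ through a finitely presentable injective $(K^\ast)_f$. You instead invoke the quasi-Frobenius duality directly to embed a finitely generated $M$ into some $R^n$, and assemble a leaner cylinder $M\oplus R^n$ by hand. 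Both routes ultimately rest on the same fact---a finitely generated module over a quasi-Frobenius ring embeds in a finitely generated free module---but yours is more self-contained and gives a smaller cylinder, while the paper's argument ties into a general cylinder recipe from \cite{KR} and makes the role of flatness explicit.
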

\begin{proof}
The model category $\modR$ is finitely combinatorial by \cite[Theorem 2.2.12]{Ho}. Since every object is cofibrant, condition (i) of Theorem \ref{thmA} is satisfied. 
(iii) is obvious, so it remains to prove (ii). Following \cite[Lemma 4.2]{KR}, cylinder objects for an object $K$ are defined by objects of 
the form $K^\ast\oplus K^\ast\oplus K$ where $K^\ast$ is a weak reflection of $K$ to an injective object in $\modR$. Since injective objects coincide with projective 
objects and, following \cite[Theorem 3.1.17]{EJ}, they coincide with flat objects, given such a weak reflection $K\to K^\ast$ of a finitely presentable $K$, there is 
a factorization through a finitely presentable injective object $(K^{\ast})_f$. Such an object can be taken as a weak reflection of $K$, and then the associated 
(non-functorial) cylinder object $\cyl(K) = (K^\ast)_f \oplus (K^\ast)_f \oplus K$ is finitely presentable, as required.
\end{proof}

Finally we comment on the cases of diagram model categories and (left) Bousfield localizations. Given a $\kappa$-combinatorial model $\mathcal{M}$ category and $C$ a small category, we can consider the diagram category $\mathcal{M}^C$ with the projective model structure where the weak equivalences and the fibrations are defined 
pointwise (see, e.g., \cite[11.6]{H}). We recall that this model category structure is lifted 
from the product model category structure on $\mathcal{M}^{\mathrm{Ob}(C)}$ along the adjunction of the restriction functor $u^*$,
$$u_!: \cm^{\mathrm{Ob}(C)} \rightleftarrows \cm^{C}: u^*.$$
The resulting model category is again $\kappa$-combinatorial, but there are difficulties with applying Theorem \ref{thmA} mainly because of condition (i). We do not know under what reasonable assumptions on 
$\cm$ and $\cw$, the category of pointwise weak equivalences $\cw_{\mathcal{M},C} \subset (\cm^{C})^{\to}$ is $\kappa$-accessible for all $C$. Note that the latter category is the same as $\cw^C$.

Concerning the weaker property of having a strong generator of $\kappa$-presentable
objects, the problem becomes much easier. Suppose that the category of weak equivalences $\cw \subset \cm^{\to}$ has a strong generator $\mathcal{W}_0$ consisting of weak equivalences between $\kappa$-presentable 
cofibrant objects. Then the subcategory of (pointwise) weak equivalences $\cw_{\mathcal{M}, \mathrm{Ob}C}$ in $(\cm^{\mathrm{Ob}C})^{\to}$ has a strong generator of $\kappa$-presentable 
cofibrant objects given by the collection of tuples $(f_i)_{i \in \mathrm{Ob}C}$ such that $f_i$ is the initial object except for $< \kappa$ entries where it is in $\cw_0$. Since 
$u^*$ is faithful and detects isomorphisms, it 
is easy to check that the image of this strong generator under $u_!$ consists of weak equivalences between $\kappa$-presentable objects and forms a strong generator for the category 
of pointwise weak equivalences $\cw_{\mathcal{M},C} \subset (\cm^{C})^{\to}$.

Regarding left Bousfield localizations (see \cite{H} for a comprehensive account), we recall that given a left proper $\kappa$-combinatorial model category $\cm$ and $S$ a set of morphisms, then the left Bousfield localization $L_S \cm$ of $\cm$ at $S$ exists and is again a combinatorial model category (see \cite[A.3.7]{L} for example). Since this has the same cofibrations, it follows that the class of $S$-local equivalences is again closed under $\kappa$-filtered 
colimits by Proposition \ref{acc-embedded}. Theorem \ref{thmA} has the following immediate corollary. We emphasize, however, that a major difficulty in estimating 
the accessibility rank of $S$-local equivalences is, of course, to specify generating sets of trivial cofibrations in $L_S \cm$. 

\begin{corollary} \label{corE}
Let $\mathcal{M}$ be a left proper $\kappa$-combinatorial model category satisfying the assumptions (i) and (iii) of 
Theorem \ref{thmA} and $S$ a set of morphisms in $\cm$. 
Suppose that the left Bousfield localization $L_S \cm$ of $\cm$ at $S$ is $\lambda$-combinatorial, $\lambda \geq \kappa$, 
and satisfies (ii) of Theorem \ref{thmA}. Then the full subcategory $\cw_S$ of $L_S \cm^{\to}$ spanned by the $S$-local equivalences is $\lambda$-accessible. 
\end{corollary}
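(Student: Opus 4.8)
The plan is to apply Theorem~\ref{thmA} to the model category $L_S\cm$ itself, with respect to the cardinal $\lambda$. Recall that $L_S\cm$ has the same underlying category and the same cofibrations as $\cm$, hence the same cofibrant objects, and that its weak equivalences are exactly the $S$-local equivalences; thus $\cw_S$ is precisely the category of weak equivalences of $L_S\cm$. By hypothesis $L_S\cm$ is $\lambda$-combinatorial and satisfies condition~(ii) at the level $\lambda$, so it remains only to verify conditions~(i) and~(iii) for $L_S\cm$ at $\lambda$ and then to invoke Theorem~\ref{thmA}. The closure of $\cw_S$ under $\lambda$-filtered colimits is automatic: by Proposition~\ref{acc-embedded} the $S$-local equivalences are closed under $\kappa$-filtered colimits, and since $\lambda \geq \kappa$ every $\lambda$-filtered colimit is in particular $\kappa$-filtered.

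Condition~(iii) transfers from $\cm$ to $L_S\cm$ by comparing homotopy pushouts. Since cofibrations and cofibrant objects agree in the two model structures, the homotopy pushout of a span of cofibrant objects is computed by the same cofibrant-replacement construction in $\cm$ and in $L_S\cm$; and because every $\cm$-weak equivalence is an $S$-local equivalence, any square that is a homotopy pushout in $\cm$ is also a homotopy pushout in $L_S\cm$. Consequently a formal cofibration in $\cm$ is again a formal cofibration in $L_S\cm$. Now if a composite $i' = qi$ is a cofibration between cofibrant objects in $L_S\cm$, then it is such in $\cm$ as well, so condition~(iii) for $\cm$ makes $i$ a formal cofibration in $\cm$, hence a formal cofibration in $L_S\cm$. (Left properness of $\cm$ is used to guarantee the existence of $L_S\cm$ as a model category; the verification of~(iii) itself needs only the inclusion of weak equivalences.)

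It remains to transfer condition~(i) from the level $\kappa$ to the level $\lambda$. Here one uses that the class of cofibrant objects, being the left class of a weak factorization system, is closed under filtered colimits in $\cm$; therefore, starting from the set $\mathcal{A}$ of $\kappa$-presentable cofibrant generators, the set $\mathcal{A}'$ of (isomorphism representatives of) $\lambda$-small $\kappa$-filtered colimits of objects of $\mathcal{A}$ consists of $\lambda$-presentable cofibrant objects, and every object of $\cm$ is a $\lambda$-filtered colimit of objects of $\mathcal{A}'$ by the standard decomposition of a $\kappa$-filtered colimit into the $\lambda$-filtered system of its $\lambda$-small sub-colimits (as in \cite{MRV}). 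I expect this cardinal transfer to be the main point requiring care, since a $\kappa$-filtered colimit is not automatically $\lambda$-filtered and the cofibrancy of the $\lambda$-presentable pieces must be maintained; it is precisely here that local $\lambda$-presentability of $L_S\cm$ is combined with the filtered-colimit-closure of cofibrant objects. With (i)--(iii) verified for $L_S\cm$ at $\lambda$, Theorem~\ref{thmA} yields that $\cw_S$ is $\lambda$-accessible.
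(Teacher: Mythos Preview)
Your overall strategy---apply Theorem~\ref{thmA} to $L_S\cm$ at the cardinal $\lambda$ and verify conditions (i)--(iii)---is exactly what the paper does, and your treatment of (iii) via preservation of homotopy pushouts under the identity-on-objects functor $\cm \to L_S\cm$ is essentially the paper's argument.

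The problem lies in your verification of (i). The assertion that ``the class of cofibrant objects, being the left class of a weak factorization system, is closed under filtered colimits in $\cm$'' is false. A concrete counterexample: in $\mathrm{Ch}(\mathbb{Z})$ with the projective model structure, the complex $\mathbb{Q}$ concentrated in degree $0$ is the directed colimit of the system
\[
\mathbb{Z} \xrightarrow{\,2\,} \mathbb{Z} \xrightarrow{\,3\,} \mathbb{Z} \xrightarrow{\,4\,} \cdots
\]
of cofibrant complexes, yet $\mathbb{Q}$ is not projective over $\mathbb{Z}$ and hence not cofibrant. (What \emph{is} closed under $\kappa$-filtered colimits, when the generating cofibrations have $\kappa$-presentable domains and codomains, is the class of \emph{trivial fibrations}; the left class of a cofibrantly generated weak factorization system is closed under transfinite composition, pushout, coproduct and retract, but not under arbitrary filtered colimits in the arrow category.) Consequently your proposed set $\mathcal{A}'$ of $\lambda$-small $\kappa$-filtered colimits of objects of $\mathcal{A}$ need not consist of cofibrant objects, and the argument for (i) at level $\lambda$ collapses.

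The paper, for its part, simply records that ``(i) and (ii) hold by assumption'' and does not spell out any cardinal transfer. Since $L_S\cm$ has the same underlying category and the same cofibrant objects as $\cm$, the set $\mathcal{A}$ supplied by hypothesis already consists of $\lambda$-presentable cofibrant objects of $L_S\cm$, and the paper evidently regards this (together with the $\kappa$-filtered generation) as sufficient for the use actually made of (i) in the proof of Theorem~\ref{thmA}. In the concrete applications of the paper every object is cofibrant, so (i) holds trivially at every cardinal. Whatever one makes of this, the route via closure of cofibrant objects under filtered colimits is not available to you.
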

\begin{proof}
It suffices to show that conditions (i)-(iii) of Theorem  \ref{thmA} are satisfied. (i) and (ii) hold by assumption. (iii) follows from the analogous 
property in $\mathcal{M}$. Indeed every formal cofibration $i: A \to B$ in $\mathcal{M}$ is also a formal cofibration in $L_S \mathcal{M}$ because
$\mathcal{M} \to L_S \mathcal{M}$ preserves homotopy pushouts.  
\end{proof}

\end{document}